\def\({\left(}
\def\){\right)}
\def\[{\left[}
\def\]{\right]}
\def\<{\left\langle}
\def\>{\right\rangle}
\def\lv{\left\lvert}
\def\rv{\right\rvert}
\def\llb{\left\llbracket}
\def\rrb{\right\rrbracket}
\def\iff{\Longleftrightarrow}
\def\implies{\Longrightarrow}
\let\geq\geqslant
\let\leq\leqslant
\let\epsilon\varepsilon
\let\oldphi\phi
\let\phi\varphi
\def\NN{\ensuremath{\mathbb{N}}}
\def\PP{\ensuremath{\mathbb{P}}}
\def\RR{\ensuremath{\mathbb{R}}}
\def\ZZ{\ensuremath{\mathbb{Z}}}
\def\One{\ensuremath{\mathbbm{1}}}
\def\cA{\ensuremath{\mathcal{A}}}
\def\cC{\ensuremath{\mathcal{C}}}
\def\cF{\ensuremath{\mathcal{F}}}
\def\cW{\ensuremath{\mathcal{W}}}
\let\phi\oldphi
\DeclareMathOperator{\Tr}{Tr}
\DeclareMathOperator{\Aut}{Aut}
\newcommand{\edge}[2]{\langle #1,#2 \rangle}
\DeclareMathOperator{\Hom}{Hom}
\newcommand{\expect}[1]{ {\mathbf E}\! \left[ #1 \right] }
\newcommand{\phiqr}{\phi_{\text{qr}}}
\renewcommand{\One}{1}
\newtheoremstyle{plain}%style name
  {\medskipamount}%space before
  {\smallskipamount}%space after
  {\slshape}%font used
  {0pt}%indentation
  {\bfseries}%modifier theorem head
  {.}%punctuation between theorem head and body
  { }%space after punctuation
  {\thmname{#1}\thmnumber{ #2}{\normalfont\thmnote{ (#3)}}}%theorem specifier
\theoremstyle{plain}
\newtheorem{theorem}{Theorem}[section]
\newtheorem{proposition}[theorem]{Proposition}
\newtheorem{lemma}[theorem]{Lemma}
\newtheorem{corollary}[theorem]{Corollary}
\newtheorem{conjecture}[theorem]{Conjecture}
\newtheorem{remark}[theorem]{Remark}
\newtheorem*{remark*}{Remark}
\newtheorem*{example*}{Example}
\title{Quasi-Carousel Tournaments}
\author{Leonardo Nagami Coregliano\thanks{Instituto de Matem\'atica e Estat\'istica,
    Universidade de S\~ao Paulo, \nolinkurl{lenacore@ime.usp.br}. Work done while
    visiting University of Chicago, supported by Funda\c c\~ao de Amparo \`a Pesquisa
    do Estado de S\~ao Paulo (FAPESP) under grants no.~2013/23720-9
    and~2014/15134-5.}}
\begin{document}
\maketitle

\begin{abstract}
  A tournament is called locally transitive if the outneighbourhood and the
  inneighbourhood of every vertex are transitive. Equivalently, a tournament is
  locally transitive if it avoids the tournaments~$W_4$ and~$L_4$, which are the only
  tournaments up to isomorphism on four vertices containing a unique~$3$-cycle. On
  the other hand, a sequence of tournaments~$(T_n)_{n\in\NN}$ with~$\lv V(T_n)\rv =
  n$ is called almost balanced if all but~$o(n)$ vertices of~$T_n$ have
  outdegree~$(1/2 + o(1))n$. In the same spirit of quasi-random properties, we
  present several characterizations of tournament sequences that are both almost
  balanced and asymptotically locally transitive in the sense that the density
  of~$W_4$ and~$L_4$ in~$T_n$ goes to zero as~$n$ goes to infinity.
\end{abstract}

A \emph{balanced} tournament~$T$ is a tournament with an odd number of
vertices~$2n+1$ such that every vertex of~$T$ has outdegree~$n$. On the other hand, a
\emph{locally transitive} tournament~$T$ is a tournament such that the
outneighbourhood~$N^+(v) = \{w\in V(T) : vw\in A(T)\}$ and the
inneighbourhood~$N^-(v) = \{w\in V(T) : wv\in A(T)\}$ of every vertex~$v$ are both
transitive. With these definitions, there is only one up to isomorphism\footnote{This
  is a direct consequence of a result of
  Brouwer~\cite{B:TheEnumerationOfLocallyTransitiveTournaments}, which is on
  Section~\ref{sec:loctran} of this paper.} balanced locally transitive
tournament~$R_{2n+1}$ (see Figure~\ref{fig:R}) of order~$2n+1$ for each~$n\in\NN$,
which we call the \emph{carousel tournament}\footnote{This is because in a
  carousel, each horse is beating half of the other horses in a circular
  structure.} of order~$2n+1$. This tournament
is given by
\begin{align*}
  V(R_{2n+1}) & =\{0,1,\ldots,2n\}; &
  A(R_{2n+1}) & = \{(x,(x+i) \bmod (2n+1)) : i\in[n]\}\};
\end{align*}
where~$[n]=\{1,2,\ldots,n\}$ (and~$[0]=\varnothing$).

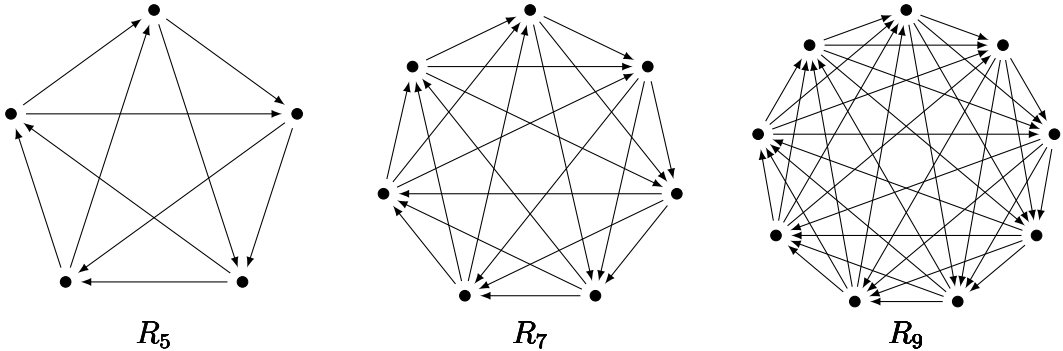
\begin{figure}[ht]
  \begin{center}
    \begin{tikzpicture}
\begingroup
\def\radius{2cm}
\def\shorten{0.2cm}
\def\step{5cm}
\foreach \n in {2,3,4}{
  \pgfmathsetmacro{\tn}{2*\n}
  \pgfmathsetmacro{\N}{\tn+1}
  \foreach \i in {0,...,\tn}{
    \pgfmathsetmacro{\angle}{\i*360/\N+90}
    \filldraw ($(\angle:\radius) + (\n*\step,0cm)$) circle (2pt);

    \foreach \j in {1,...,\n}{
      \pgfmathsetmacro{\angletwo}{\angle-\j*360/\N}
      \draw[shorten >=\shorten,shorten <=\shorten,arrows={-latex}]
      ($(\angle:\radius) + (\n*\step,0cm)$) --
      ($(\angletwo:\radius) + (\n*\step,0cm)$);
    }

    \node[below] at (\n*\step,-\radius) {$R_{\pgfmathprintnumber{\N}}$};
  }
}

\endgroup
\end{tikzpicture}
    \caption{The tournament~$R_{2n+1}$ for~$n=2,3,4$.}
    \label{fig:R}
  \end{center}
\end{figure}

Given the well-organized structure of the carousel tournaments, it is natural to
expect nice asymptotic properties to hold for the sequence~$(R_{2n+1})_{n\in\NN}$
and in this note we begin studying this sequence asymptotically in two directions. In
the first direction, we are simply interested in what are the asymptotic properties
of~$(R_{2n+1})_{n\in\NN}$. But, in the more stimulating second direction, we are
interested in the question: when does a sequence of tournaments~$(T_n)_{n\in\NN}$
``look like'' the sequence~$(R_{2n+1})_{n\in\NN}$?

Although this seems a rather vague question, it turns out that there is a notion of
similarity of sequences of combinatorial objects that yields a very rich field of
study. Namely, we say that two sequences of tournaments~$(T_n)_{n\in\NN}$
and~$(T_n')_{n\in\NN}$ are equivalent if for every fixed tournament~$T$ the density
of~$T$ in~$T_n$ is asymptotically equal to the density of~$T$ in~$T_n'$, that is, we
have
\begin{align*}
  \lim_{n\to\infty}p(T,T_n) - p(T,T_n') & = 0,
\end{align*}
where~$p(T,U)$ denotes the unlabelled density of~$T$ as a subtournament of~$U$.

This notion of similarity can be traced back to the theory of quasi-randomness,
originated with the study of graphs sequences (by comparing with the sequence of
Erd\H{o}s--R\'{e}nyi graphs~$(\bm{G_{n,1/2}})_{n\in\NN}$) in the seminal papers by
Thomason~\cite{T:PseudoRandomGraphs} and Chung, Graham,
Wilson~\cite{CGW:QuasiRandomGraphs} (see~\cite{KS:PseudoRandomGraphs} for a survey)
and now a field with branches in several combinatorial objects such as uniform
hypergraphs~\cite{CG:QuasiRandomHypergraphs, C:QuasiRandomHypergraphsRevisited,
  BR:NoteOnUpperDensityOfQuasiRandomHypergraphs}, graph
orientations~\cite{G:QuasiRandomOrientedGraphs},
permutations~\cite{C:QuasirandomPermutations,
  KP:QuasirandomPermutationsAreCharacterized} and
tournaments~\cite{CG:QuasiRandomTournaments,KS:ANoteOnEvenCyclesAndQuasirandomTournaments,
  CR:OnTheDensityOfTransitiveTournaments}.

Such notion of similarity also yields a very useful notion of convergence, namely, we
say that~$(T_n)_{n\in\NN}$ is convergent if~$(\lv V(T_n) \rv)_{n\in\NN}$ is
increasing and~$(p(T,T_n))_{n\in\NN}$ is convergent for every tournament~$T$. With
this notion of convergence, one can define limit objects that codify these
densities. One approach is to define the limit object to be semantically close to the
underlying combinatorial objects, that is, to find a limit object that resembles the
definition of the combinatorial objects (such approach was originated with the
definition of graphons~\cite{LS:LimitsOfDenseGraphSequences} and has also been taken
in the definition of hypergraphons~\cite{ES:MeasureTheoreticApproach},
permutons~\cite{HKMRS:LimitsOfPermutationSequences} and
digraphons~\cite[Section~9]{DJ:GraphLimitsAndExchangeableRandomGraphs}). Another
approach is to study the limit object syntactically, that is, to see what kind of
properties the sequence~$(\phi(T))_T$ must satisfy if we have~$\phi(T) =
\lim_{n\to\infty}p(T,T_n)$. This latter approach is precisely the thrust of the
theory of flag~algebras~\cite{R:FlagAlgebras} and in what follows, we will mostly use
this language.

In the particular case of quasi-random tournaments, we are interested in comparing
with the sequence~$(\bm{T_{1/2}(n)})_{n\in\NN}$, where~$\bm{T_{1/2}(n)}$ is the
random tournament of order~$n$ where each arc orientation is picked independently at
random with probability~$1/2$. It is a straightforward exercise on distribution
concentration to prove that~$(\bm{T_{1/2}(n)})_{n\in\NN}$ is a convergent sequence
with probability~$1$ and we call its limit~$\phiqr\in\Hom^+(\cA^0,\RR)$ in the
flag~algebra language the \emph{quasi-random homomorphism}. It is also
straightforward to prove that the sequence of carousel
tournaments~$(R_{2n+1})_{n\in\NN}$ is convergent\footnote{In Section~\ref{sec:conv},
  we also offer an alternative proof of this convergence that does not involve
  computing the limit of the densities~$p(T,R_{2n+1})$.} and we call its
limit~$\phi_R\in\Hom^+(\cA^0,\RR)$ the \emph{carousel homomorphism}.

The theory of quasi-random tournaments was inaugurated by Chung and~Graham
in~\cite{CG:QuasiRandomTournaments}, where they presented not only some quasi-random
tournament properties (their~$P$ properties), but also showed another class of
properties (their~$Q$ properties) that were equivalent to each other but were
strictly weaker than the quasi-random properties.

For every~$k\in\NN$, let~$\Tr_k$ denote the transitive tournament of order~$k$
and~$\vec C_3$ denote the directed~$3$-cycle. We are particularly interested in the
following~$Q$ properties of a sequence of tournaments~$(T_n)_{n\in\NN}$ with~$\lv
V(T_n)\rv = n$.
\begin{itemize}
\item $Q_1$: $\lim_{n\to\infty}p(\Tr_3,T_n) = 3/4$ and~$\lim_{n\to\infty}p(\vec C_3,
  T_n) = 1/4$;
\item $Q_2$: $p(\vec C_3,T_n)$ is asymptotically maximized by the
  sequence~$(T_n)_{n\in\NN}$;
\item $Q_3$: The sequence of tournaments~$(T_n)_{n\in\NN}$ of increasing orders is
  \emph{almost balanced}, that is, all but~$o(n)$ vertices of~$T_n$ have
  outdegree~$(1/2 + o(1))n$.
\end{itemize}

Now, consider the extremal problem of minimizing the density of a fixed
tournament~$T$ asymptotically in a sequence of tournaments~$(T_n)_{n\in\NN}$ of
increasing orders. In the language of flag~algebras, this can be cleanly stated as
minimizing~$\phi(T)$ for~$\phi\in\Hom^+(\cA^0,\RR)$.

If~$T$ is non-transitive, this problem is trivial because we can take~$T_n$ to be the
transitive tournament~$\Tr_n$ of size~$n$ and we will have~$p(T,T_n)=0$ for
every~$n\in\NN$.

For the transitive case, Chung and~Graham's~\cite{CG:QuasiRandomTournaments}
property~$Q_2$ implies that~$\phi(\Tr_3)$ is minimized if and only if~$\phi$ is the
limit of an almost balanced sequence. Later,
Griffiths~\cite{G:QuasiRandomOrientedGraphs} proved that~$\phi(\Tr_4)$ is minimized
if and only if~$\phi$ is the quasi-random homomorphism~$\phiqr$. Finally, the
minimization problem for a single tournament was closed when Griffith's result was
extended in~\cite{CR:OnTheDensityOfTransitiveTournaments}: for~$k\geq 4$, the
density~$\phi(\Tr_k)$ is minimized if and only if~$\phi$ is the quasi-random
homomorphism~$\phiqr$.

Now, if we consider the analogous maximization problem, the gears completely reverse:
the transitive case becomes the trivial case (since~$p(\Tr_k,\Tr_n) = 1$ for
every~$k\leq n$) and property~$Q_2$ of Chung and~Graham says that~$\phi(\vec C_3)$ is
maximized if and only if~$\phi$ is the limit of an almost balanced sequence. However,
this leaves the maximization problem open for every non-transitive tournament of
order at least~$4$, thus making the maximization problem much more meaningful.

In this note, we begin studying this maximization problem by proving that for the
unique tournament~$R_4$ with outdegree sequence~$(1,1,2,2)$, the density~$\phi(R_4)$
is maximized if and only if~$\phi$ is the carousel
homomorphism~$\phi_R$. Furthermore, in the same spirit of the quasi-randomness
theory, we present several properties that a sequence of tournaments has if and only
if it is equivalent to~$(R_{2n+1})_{n\in\NN}$ (and we call a sequence having these
properties a \emph{quasi-carousel sequence}).

In the same flavour of the carousel tournaments, one of these properties
implies that~$\phi_R$ is the only balanced locally transitive homomorphism after we
extend the notions of balancedness and local transitivity to homomorphisms.

Let us also highlight another set of properties of the carousel homomorphism~$\phi_R$
that have nice analogues for the quasi-random homomorphism~$\phiqr$. If~$\edge uv$ is
an arc of a tournament~$T$, all other vertices~$w\in V(T)\setminus\{u,v\}$ can be
classified into four classes (``flags''):
\begin{enumerate}
\item $\edge uw, \edge vw\in E(T)$,

\item $\edge wu, \edge wv\in E(T)$,

\item $\edge uw, \edge wv\in E(T)$.

\item $\edge vw, \edge wu\in E(T)$,
\end{enumerate}

Following and expanding a bit the notation
in~\cite{R:OnTheCaccettaHaggkvistConjecture}, we let~$O^A(u,v)$, $I^A(u,v)$,
$\Tr_3^A(u,v)$ and~$\vec C_3^A(u,v)$ denote the numbers of vertices in the four
classes (taken in this order, see also Figure~\ref{fig:typesandflags}) divided
by~$|V(T)|-2$. A set of interesting characterizations of quasi-randomness says that
if~$F$ is any of~$O^A$, $I^A$, $\Tr_3^A$ or~$\vec C_3^A$, then a sequence of
tournaments~$(T_n)_{n\in\NN}$ is quasi-random if and only if~$F(u,v)$ is
``nearly''~$1/4$ for ``almost all'' arcs~$\edge uv$ (the theorems for~$O^A$
and~$I^A$ are from~\cite{CG:QuasiRandomTournaments} and the theorems for the other
two classes are from~\cite{CR:OnTheDensityOfTransitiveTournaments}). This can be
stated formally and cleanly\footnote{But can be stated even more cleanly in the
  language of flag~algebras using extensions of homomorphisms~\cite[\S
    3.2]{R:FlagAlgebras}.} by saying that if~$\bm{\edge{u_n}{v_n}}$ is a random arc
of~$T_n$ picked uniformly at random, then the sequence of random
variables~$(F(\bm{u_n},\bm{v_n}))_{n\in\NN}$ converges almost surely to~$1/4$.

In the case of the carousel homomorphism, we prove an interesting analogous
characterization: a sequence~$(T_n)_{n\in\NN}$ converges to~$\phi_R$ if and only if
the sequence of random variables~$F(\bm{u_n},\bm{v_n})$ converges in distribution to
the uniform random variable on~$[0,1/2]$.

The note is organized as follows. In Section~\ref{sec:loctran}, we review some basic
properties of locally transitive tournaments. In Section~\ref{sec:flag}, we remind
some concepts of the theory of flag~algebras and of the tournament quasi-randomness
theory. We also establish some basic lemmas on the flag~algebra of tournaments in the
same section. In Section~\ref{sec:phiR}, we present the main theorem that
characterizes the carousel homomorphism~$\phi_R$, but we defer the
proof of convergence of the sequence~$(R_{2n+1})_{n\in\NN}$ to
Section~\ref{sec:conv}. Finally, in Section~\ref{sec:conc}, we present some related
open problems.

\section{Locally Transitive Tournaments}
\label{sec:loctran}

In this section, we remind some basic properties of locally transitive tournaments.

A tournament~$T$ is called \emph{locally transitive} if for every vertex~$v\in V(T)$,
the outneighbourhood~$N^+(v) = \{w\in V(T) : vw\in A(T)\}$ and the
inneighbourhood~$N^-(v) = \{w\in V(T) : wv\in A(T)\}$ of~$v$ are both transitive.

Let~$W_4$ and~$L_4$ denote the (unique) tournaments with outdegree
sequences~$(1,1,1,3)$ and~$(0,2,2,2)$ respectively (i.e., these are precisely the
tournaments of order~$4$ that have a unique copy of a directed~$3$-cycle~$\vec
C_3$). The following characterization follows immediately from the definition of
local transitivity.

\begin{proposition}\label{prop:loctranW4L4}
  A tournament~$T$ is locally transitive if and only if~$T$ has no copy of~$W_4$ nor
  of~$L_4$.
\end{proposition}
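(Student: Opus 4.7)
The plan is to reduce both directions to the elementary fact that a tournament is transitive if and only if it contains no directed~$3$-cycle~$\vec C_3$, and to observe that attaching a common dominator (respectively common dominated vertex) to a~$\vec C_3$ produces exactly~$W_4$ (respectively~$L_4$).

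For the forward direction I would argue the contrapositive: if~$T$ contains~$W_4$, pick the (unique) vertex~$v$ of outdegree~$3$ inside the copy. The three remaining vertices~$a,b,c$ each have outdegree~$1$ in~$W_4$, and since~$v$ beats all of them, none of those out-arcs points to~$v$, hence the three out-arcs lie entirely within~$\{a,b,c\}$. A tournament on three vertices where every vertex has outdegree~$1$ is a~$\vec C_3$, so~$\{a,b,c\}\subseteq N^+(v)$ induces a~$\vec C_3$ in~$T$. In particular~$N^+(v)$ is not transitive and~$T$ is not locally transitive. A symmetric argument applied to the vertex~$u$ of outdegree~$0$ in a copy of~$L_4$ shows that~$N^-(u)$ contains a~$\vec C_3$, so~$T$ is again not locally transitive.

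For the converse, suppose~$T$ is not locally transitive. Then some vertex~$v$ has a non-transitive out- or in-neighbourhood, and such a tournament must contain a~$\vec C_3$. In the first case, choosing a~$\vec C_3$ on vertices~$a,b,c\in N^+(v)$ makes~$\{v,a,b,c\}$ into a~$4$-vertex subtournament with outdegree sequence~$(3,1,1,1)$, i.e.~a copy of~$W_4$; in the second case the analogous choice in~$N^-(v)$ yields outdegree sequence~$(0,2,2,2)$, i.e.~a copy of~$L_4$.

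There is no real obstacle here: the proof is a direct unpacking of the definitions, with the only non-tautological input being the classical characterization of transitive tournaments by absence of~$\vec C_3$. The slightly delicate step, which I would write out explicitly, is the outdegree bookkeeping that identifies~$\{a,b,c\}$ as an induced~$\vec C_3$ in each direction; everything else is immediate.
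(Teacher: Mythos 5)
Your proof is correct and complete. The paper gives no argument at all for this proposition, stating only that it ``follows immediately from the definition of local transitivity''; what you have written is precisely the routine unpacking the author had in mind --- identifying the apex vertex of~$W_4$ (resp.~$L_4$), the outdegree count showing the other three vertices form a~$\vec C_3$ inside~$N^+(v)$ (resp.~$N^-(v)$), and, conversely, building~$W_4$ or~$L_4$ from a~$\vec C_3$ in a non-transitive out- or in-neighbourhood.
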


Note that if~$v$ is a vertex of a locally transitive tournament~$T$, then the arcs
of~$T$ induce linear orders on~$N^+(v)$ and~$N^-(v)$ (that is, defining~$w <_T z \iff
wz\in A(T)$, the restriction of the relation~$<_T$ to either of these sets is a
linear order). With this observation, Brouwer obtained the following properties.

\begin{proposition}[Brouwer~\cite{B:TheEnumerationOfLocallyTransitiveTournaments}]
  \label{prop:cupint}
  If~$v$ is a vertex of a locally transitive tournament~$T$ and~$a\in N^+(v)$,
  then~$N^+(a)$ is the union of a terminal interval of~$N^+(v)$ and an initial
  interval of~$N^-(v)$ (in the order induced by the arcs of~$T$).
\end{proposition}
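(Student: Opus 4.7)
The plan is to exploit the fact that local transitivity forces $N^+(v)$ and $N^-(v)$ to be linearly ordered by $<_T$, and then analyze $N^+(a)$ separately on each side of the partition $V(T)\setminus\{v\} = N^+(v)\sqcup N^-(v)$. Since $va\in A(T)$, we have $v\in N^-(a)$, so
\begin{equation*}
  N^+(a) = \bigl(N^+(a)\cap N^+(v)\bigr) \sqcup \bigl(N^+(a)\cap N^-(v)\bigr),
\end{equation*}
and it suffices to show that the first piece is a terminal interval of $N^+(v)$ and the second is an initial interval of $N^-(v)$.

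For the terminal-interval part, the argument is immediate: for any $w\in N^+(v)\setminus\{a\}$, transitivity of $N^+(v)$ forces exactly one of $aw$, $wa$ to be an arc, determined by the linear order $<_T$ on $N^+(v)$. Hence $N^+(a)\cap N^+(v) = \{w\in N^+(v) : a<_T w\}$, which is a terminal interval of $N^+(v)$.

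The main content is the initial-interval claim, which I would prove by contradiction. Suppose $b\in N^+(a)\cap N^-(v)$ and $b'\in N^-(v)$ with $b'<_T b$, but $b'\notin N^+(a)$, i.e., $b'a\in A(T)$. Then $a,v,b\in N^+(b')$: we have $b'a\in A(T)$ by assumption, $b'v\in A(T)$ because $b'\in N^-(v)$, and $b'b\in A(T)$ because $b'<_T b$. However, the induced arcs on $\{a,v,b\}$ are $va$, $ab$, and $bv$, forming a directed $\vec C_3$ sitting inside $N^+(b')$ and contradicting its transitivity.

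The key (and really only non-routine) observation is spotting that $\{a,v,b\}$ becomes a $\vec C_3$ inside the outneighbourhood of $b'$; once that is identified, the contradiction with local transitivity is immediate. Everything else is bookkeeping with the linear orders that local transitivity induces on $N^+(v)$ and $N^-(v)$.
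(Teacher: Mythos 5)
Your proof is correct and follows essentially the same route as the paper's: you first observe that $N^+(a)\cap N^+(v)$ is automatically a terminal interval of $N^+(v)$ by transitivity of $N^+(v)$, and then derive a contradiction from a failure of the initial-interval claim by exhibiting a directed $3$-cycle $v\to a\to b\to v$ inside $N^+(b')$. Up to a relabelling of variables (the paper's $b,c$ are your $b',b$), this is exactly the argument given in the paper.
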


\begin{proof}
  From the order induced on~$N^+(v)$, it follows that~$N^+(a)\cap N^+(v)$ is a
  terminal interval of~$N^+(v)$. This means that if the proposition is false, there
  must exist~$b,c\in N^-(v)$ such that~$bc\in A(T)$, $c\in N^+(a)$ and~$b\notin
  N^+(a)$. This implies that~$a,c,v\in N^+(b)$ and~$ac,cv,va\in A(T)$, hence~$N^+(b)$
  is not transitive, a contradiction.
\end{proof}

\begin{proposition}[Brouwer~\cite{B:TheEnumerationOfLocallyTransitiveTournaments}]
  \label{prop:loctranchar}
  A tournament~$T$ is locally transitive if and only if it can be cyclically ordered
  in a way such that
  \begin{enumerate}[label=(\roman*)]
  \item For every vertex~$v\in V(T)$, the sets~$N^+(v)\cup\{v\}$
    and~$N^-(v)\cup\{v\}$ are intervals of the cyclic order (with one endpoint
    being~$v$);
    \label{it:cycint}
  \item For every vertices~$v,a\in V(T)$ with~$a\in N^+(v)$, the set~$N^+(a)$ is the
    union of a terminal interval of~$N^+(v)$ and an initial interval of~$N^-(v)$ (in
    the cyclic order).
    \label{it:cupint}
  \end{enumerate}
\end{proposition}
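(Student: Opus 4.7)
The plan is to prove both implications. The backward direction is a short deduction from~(ii); the forward direction proceeds by explicitly constructing a cyclic order from a choice of base vertex and then verifying (i) and~(ii), with heavy use of Proposition~\ref{prop:cupint} and its dual in the reverse tournament.

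For the backward direction, I assume the cyclic order satisfies (i) and~(ii) and show that each~$N^+(v)$ is transitive (with~$N^-(v)$ symmetric). Given~$a,b,c \in N^+(v)$ with~$a\to b$ and~$b\to c$, property~(ii) at~$v$ applied to both~$a$ and~$b$ says that~$N^+(a) \cap N^+(v)$ and~$N^+(b) \cap N^+(v)$ are terminal intervals of~$N^+(v)$ in the cyclic order. Since~$b \in N^+(a) \setminus N^+(b)$, the terminal interval for~$a$ contains~$b$ and hence all of~$N^+(v)$ that cyclically follows~$b$, whereas the terminal interval for~$b$ begins strictly past~$b$. Because~$c\in N^+(b) \cap N^+(v)$, $c$ lies in the latter and therefore also in the former, so~$c \in N^+(a)$.

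For the forward direction, I pick any~$v_0 \in V(T)$. Local transitivity linearly orders~$N^+(v_0) = \{a_1 <_T \cdots <_T a_k\}$ and~$N^-(v_0) = \{b_1 <_T \cdots <_T b_m\}$ via the arcs of~$T$. I then define the cyclic order
\[ v_0,\ a_1,\ a_2,\ \ldots,\ a_k,\ b_1,\ b_2,\ \ldots,\ b_m \]
(returning to~$v_0$) and verify~(i) and~(ii). For~(i): the case~$u=v_0$ is immediate; for~$u=a_j$, applying Proposition~\ref{prop:cupint} to~$v_0,a_j$ identifies~$N^+(a_j) \cap N^+(v_0)$ as the terminal interval~$\{a_{j+1},\ldots,a_k\}$ (forced by~$a_i \in N^+(a_j) \iff i>j$) and~$N^+(a_j) \cap N^-(v_0)$ as an initial interval~$\{b_1,\ldots,b_{m'}\}$, so~$N^+(a_j) \cup \{a_j\}$ is the cyclic interval starting at~$a_j$ and~$N^-(a_j) \cup \{a_j\}$ is its complement. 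The case~$u=b_i$ is handled by applying Proposition~\ref{prop:cupint} to the reverse tournament~$T^{\mathrm{rev}}$ (locally transitive since~$W_4^{\mathrm{rev}} = L_4$ and vice versa), which yields the dual identity for~$N^-(b_i)$ and hence the desired interval structure.

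For~(ii), I apply Proposition~\ref{prop:cupint} directly to~$u$ and~$a \in N^+(u)$, obtaining~$N^+(a)$ as a terminal interval of~$N^+(u)$ union an initial interval of~$N^-(u)$ in the arc-induced~$<_T$ orders. To match this with the statement phrased in terms of the cyclic order, I verify that the~$<_T$ orders on~$N^+(u)$ and~$N^-(u)$ agree with the cyclic order restricted to these sets: among indices lying entirely in~$N^+(v_0)$ or entirely in~$N^-(v_0)$ this is immediate from the construction, while the remaining cross-comparisons---for instance, whether~$a_i \to b_{i'}$ when both lie in~$N^+(u)$---are forced by Proposition~\ref{prop:loctranW4L4}: a brief outdegree-sequence check on the 4-vertex subtournament induced by~$\{v_0,u,a_i,b_{i'}\}$ shows that the wrong orientation produces a~$W_4$ or~$L_4$. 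This forbidden-subtournament case analysis is the main technical obstacle; everything else is Proposition~\ref{prop:cupint} together with routine bookkeeping of intervals.
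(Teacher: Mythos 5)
Your forward direction is essentially the paper's: same choice of base vertex, same cyclic order, and Proposition~\ref{prop:cupint} as the engine. Your use of the reversed tournament to handle the case~$u\in N^-(v_0)$, and the explicit forbidden-subtournament check that the arc-orders on~$N^+(u)$ and~$N^-(u)$ agree with the cyclic order, are legitimate ways to make rigorous steps the paper states more tersely.

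The backward direction, however, has a real gap, and it is a different route from the paper's. You prove~$N^+(v)$ is transitive using~(ii) at~$v$ and then assert that~$N^-(v)$ is ``symmetric.'' It is not: condition~(ii) is intrinsically one-sided, constraining~$N^+(a)$ only for~$a\in N^+(v)$, so it says nothing directly about arcs among vertices of~$N^-(v)$. Your argument relies on~$N^+(a)\cap N^+(v)$ and~$N^+(b)\cap N^+(v)$ being terminal intervals of~$N^+(v)$; for~$a,b\in N^-(v)$ the corresponding statement would be a \emph{dual} of~(ii) which is neither hypothesized nor an immediate re-labelling. The gap is not cosmetic: in~$L_4$ every out-neighbourhood has at most two vertices and is therefore trivially transitive, yet~$L_4$ is not locally transitive --- so ``all out-neighbourhoods transitive'' is strictly weaker than local transitivity and your~$N^+$ argument alone cannot deliver~$N^-$. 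The paper avoids this by arguing the contrapositive: if~$T$ contained~$W_4$ or~$L_4$, any cyclic order satisfying~(i) and~(ii) would restrict to such a cyclic order on those four vertices, and a short finite check shows neither~$W_4$ nor~$L_4$ admits one. You can repair your proof the same way: supposing~$a,b,c\in N^-(v)$ formed a directed~$3$-cycle, the restriction of the cyclic order to~$\{v,a,b,c\}$ (which induces~$L_4$) would still satisfy~(i), and one checks directly that~$L_4$ has no cyclic order for which every~$N^+(u)\cup\{u\}$ is an interval with~$u$ as an endpoint.
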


\begin{proof}
  Suppose~$T$ is a locally transitive tournament of order~$n$ and let~$w_0$ be one of
  its vertices. Let~$w_1,w_2,\ldots,w_k$ be the vertices in~$N^+(w_0)$ in the order
  induced by the arcs of~$T$ and let~$w_{k+1},w_{k+2},\ldots,w_{n-1}$ be the vertices
  in~$N^-(w_0)$ in the order induced by the arcs of~$T$.

  Consider the cyclic order induced by the mapping~$\ZZ_n\ni i\mapsto w_i\in V(T)$,
  where~$\ZZ_n=\ZZ/(n\ZZ)$ denotes the cyclic group of order~$n$.

  Trivially item~\ref{it:cycint} holds for~$v=w_0$. Note also that to prove
  item~\ref{it:cycint} for a vertex~$v$, it is enough to prove just the assertion
  regarding the set~$N^+(v)\cup\{v\}$.

  Now, since the orders on~$N^+(w_0)$ and~$N^-(w_0)$ induced by the arcs of~$T$
  coincide with the orders induced by the cyclic order defined, if~$v\in N^+(w_0)$,
  then Proposition~\ref{prop:cupint} implies that~$N^+(v)$ is of the form
  \begin{align*}
    \{w_i,w_{i+1},\ldots,w_k\}\cup\{w_{k+1},w_{k+2},\ldots,w_j\},
  \end{align*}
  for some~$i \leq j$, hence an interval of the cyclic order. Furthermore, the
  definition of the cyclic order implies that~$w_{i-1}=v$, hence is~$N^+(v)\cup\{v\}$
  an interval of cyclic order with one endpoint being~$v$.

  Finally, suppose that~$v\in N^-(w_0)$. From the definition of the cyclic order, we know
  that~$(N^+(v)\cup\{v\})\cap N^-(w_0)$ is an interval with endpoints~$v$
  and~$v_{n-1}$. On the other hand, Proposition~\ref{prop:cupint} implies
  that~$N^+(v)\cap N^-(w_0)$ must be a terminal interval of~$N^+(v)$ in the order
  induced by the arcs of~$T$, but since this order coincides with the one induced by
  the cyclic order in~$N^-(w_0)$, we have that~$N^+(v)\cup\{v\}$ is an interval with
  an endpoint being~$v$.

  Now that item~\ref{it:cycint} is proved, we know that for every vertex~$v\in V(T)$
  the order induced by the arcs of~$T$ in the sets~$N^+(v)$ and~$N^-(v)$ coincide
  with the ones induced by the cyclic order. With this observation,
  item~\ref{it:cupint} follows directly from Proposition~\ref{prop:cupint}.

  \medskip

  Suppose now that~$T$ is not locally transitive. By
  Proposition~\ref{prop:loctranW4L4}, there must be a set~$X$ of four vertices of~$T$
  that induces an occurrence of either~$W_4$ or~$L_4$ in~$T$.

  Note that any cyclic order satisfying items~\ref{it:cycint} and~\ref{it:cupint}
  in~$T$ must induce a cyclic order on~$X$ that satisfies these items in the
  tournament induced by this set.

  Since neither~$W_4$ nor~$L_4$ have a cyclic ordering satisfying both
  items~\ref{it:cycint} and~\ref{it:cupint}, the proof is complete.
\end{proof}

Recalling that a \emph{balanced} tournament is a tournament of odd order~$2n+1$ such
that every vertex has outdegree~$n$, we get the following corollary.

\begin{corollary}\label{cor:Runique}
  For every~$n\in\NN$, there is exactly one up to isomorphism balanced locally
  transitive tournament~$R_{2n+1}$ (see Figure~\ref{fig:R}) of order~$2n+1$ and it is
  given by
  \begin{align*}
    V(R_{2n+1}) & =\{0,1,\ldots,2n\}; &
    A(R_{2n+1}) & = \{(x,(x+i) \bmod (2n+1)) : i\in[n]\}\};
  \end{align*}
  where~$[n]=\{1,2,\ldots,n\}$ (and~$[0]=\varnothing$).
\end{corollary}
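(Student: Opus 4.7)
The plan is to invoke Proposition~\ref{prop:loctranchar} and use balancedness to pin down the structure essentially uniquely. Direct verification shows that~$R_{2n+1}$ as defined is both balanced (each~$x$ has outneighbourhood~$\{x+1,\ldots,x+n\}$ of size~$n$) and locally transitive (the arc relation on~$N^+(x)$ is~$x+i \to x+j$ iff~$i<j$, and symmetrically for~$N^-(x)$). So the work is entirely in the uniqueness direction.

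Let~$T$ be any balanced locally transitive tournament of order~$2n+1$. First I would apply Proposition~\ref{prop:loctranchar} to fix a cyclic ordering~$w_0,w_1,\ldots,w_{2n}$ satisfying items~\ref{it:cycint} and~\ref{it:cupint}. Balancedness gives~$\lv N^+(v)\rv = n$ for every~$v$, so item~\ref{it:cycint} forces~$N^+(v)\cup\{v\}$ to be an interval of length exactly~$n+1$ with~$v$ as an endpoint. In a cycle of length~$2n+1$, this leaves exactly two possibilities for each vertex~$w_i$, namely~$N^+(w_i) = \{w_{i+1},\ldots,w_{i+n}\}$ or~$N^+(w_i) = \{w_{i-n},\ldots,w_{i-1}\}$ (indices mod~$2n+1$). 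Since reversing the cyclic order preserves items~\ref{it:cycint} and~\ref{it:cupint}, I can assume without loss of generality that~$N^+(w_0) = \{w_1,\ldots,w_n\}$.

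It then remains to propagate this choice to every other vertex. Setting~$a = w_1 \in N^+(w_0)$, item~\ref{it:cupint} says~$N^+(a)$ is the union of a terminal interval of~$N^+(w_0) = \{w_1,\ldots,w_n\}$ and an initial interval of~$N^-(w_0) = \{w_{n+1},\ldots,w_{2n}\}$. Since~$a = w_1 \notin N^+(a)$, the terminal interval of~$N^+(w_0)$ can include at most~$\{w_2,\ldots,w_n\}$; combined with~$\lv N^+(a)\rv = n$ and item~\ref{it:cycint} (which forces~$N^+(a)\cup\{w_1\}$ to be an interval with~$w_1$ as endpoint), one finds~$N^+(w_1) = \{w_2,\ldots,w_{n+1}\}$. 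An obvious induction along the cycle gives~$N^+(w_i) = \{w_{i+1},\ldots,w_{i+n}\}$ for every~$i \in \ZZ_{2n+1}$, so the map~$w_i\mapsto i$ is an isomorphism~$T \to R_{2n+1}$.

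I do not expect any genuine obstacle: the entire structural content has been absorbed into Proposition~\ref{prop:loctranchar}, and balancedness is merely the pigeonhole fact that fixes the interval length to~$n+1$. The only minor care needed is bookkeeping of cyclic indices and the remark that the two admissible orientations of the cyclic order give isomorphic tournaments, so the WLOG step above is genuinely free.
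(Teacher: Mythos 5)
Your proof is correct and follows the same route as the paper: apply Proposition~\ref{prop:loctranchar} to get a cyclic ordering of~$T$ and use balancedness to force each~$N^+(w_i)\cup\{w_i\}$ to be an interval of length~$n+1$ with~$w_i$ as an endpoint. The paper simply asserts that the resulting bijection~$f\:\ZZ_{2n+1}\to V(T)$ is ``easy to see'' to be an isomorphism, and you are supplying exactly the omitted bookkeeping. One small simplification worth noting: item~\ref{it:cupint} of Proposition~\ref{prop:loctranchar} is not needed for the propagation step. Once~$N^+(w_i)=\{w_{i+1},\ldots,w_{i+n}\}$, item~\ref{it:cycint} alone says~$N^+(w_{i+1})$ is either~$\{w_{i+2},\ldots,w_{i+n+1}\}$ or~$\{w_{i+1-n},\ldots,w_i\}$, and the latter is excluded because~$w_{i+1}\in N^+(w_i)$ forces~$w_i\notin N^+(w_{i+1})$; this also makes the WLOG reversal step carry less weight, since item~\ref{it:cycint} is manifestly symmetric under reversing the cyclic order, whereas the symmetry of item~\ref{it:cupint} (with its ``terminal''/``initial'' asymmetry) is a little less immediate than your parenthetical suggests.
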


\begin{proof}
  Trivially~$R_{2n+1}$ is a balanced locally transitive tournament.

  On the other hand, if~$T$ is a balanced locally transitive tournament of
  order~$2n+1$, Proposition~\ref{prop:loctranchar} gives us a cyclic
  ordering~$f\:\ZZ_{2n+1}\to V(T)$, where~$\ZZ_{2n+1}=\ZZ/((2n+1)\ZZ)$ denotes the
  cyclic group of order~$2n+1$. It is easy to see that~$f$ is an isomorphism
  between~$R_{2n+1}$ and~$T$.
\end{proof}

We call~$R_{2n+1}$ the \emph{carousel tournament} of order~$2n+1$.

\begin{remark}\label{rmk:Rname}
  Although we define the carousel tournament~$R_n$ only for odd values of~$n$,
  our choice of notation~$R$ comes from analogy with the structure of~$R_4$, which is
  the locally transitive tournament of order~$4$ closest to being balanced.
\end{remark}

\section{Almost Balanced Tournament Sequences in Flag~Algebras}
\label{sec:flag}

In this section, we translate the results of the theory of quasi-random tournaments
regarding almost balanced tournament sequences to the language of flag~algebras. We
also add another characterization that will be useful later on. We assume the reader
has some familiarity with the basic setting of flag~algebras and with the notion of
extensions of homomorphisms~\cite[\S 3.2]{R:FlagAlgebras}.

Following the notation of~\cite{R:FlagAlgebras,R:OnTheCaccettaHaggkvistConjecture},
we consider the theory of tournaments~$T_{\text{Tournaments}}$ (and we will drop this
from notation when it is clear from the context). We let~$0$ denote the trivial type
of order~$0$ and~$1$ denote the (unique) type of order~$1$ as usual. We also
define~$A$ to be the type of order~$2$ such that the vertex labelled with~$1$ beats
the other (labelled) vertex (see Figure~\ref{fig:typesandflags}). For a
type~$\sigma$, we denote the unity of the algebra~$\cA^\sigma$ by~$\One_\sigma$, and,
as always, the element~$\One_0$ is abbreviated to~$\One$.

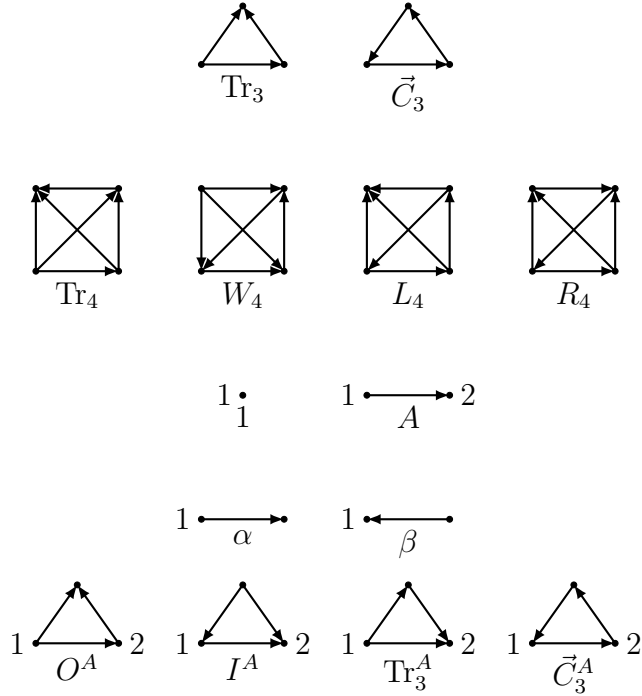
\begin{figure}[ht]
  \begin{center}
    \begin{tikzpicture}[scale=1.1]
  \foreach \i in {0,...,3}{
    \pgfmathsetmacro{\base}{2*\i}
    \pgfmathsetmacro{\next}{\base+1}

    \coordinate (T3A\i) at (\base cm, 1cm);
    \coordinate (T3B\i) at (\next cm, 1cm);
    \coordinate (T3C\i) at ($1/2*(T3A\i) + 1/2*(T3B\i) + (0cm,0.707106cm)$);

    \coordinate (T4A\i) at (\base cm,-1.5cm);
    \coordinate (T4B\i) at (\next cm,-1.5cm);
    \coordinate (T4C\i) at (\next cm,-0.5cm);
    \coordinate (T4D\i) at (\base cm,-0.5cm);

    \coordinate (TpA\i) at (\base cm, -3cm);
    \coordinate (TpC\i) at (\next cm, -3cm);
    \coordinate (TpB\i) at ($1/2*(TpA\i) + 1/2*(TpC\i)$);

    \coordinate (F2A\i) at (\base cm,-4.5cm);
    \coordinate (F2B\i) at (\next cm,-4.5cm);

    \coordinate (F3A\i) at (\base cm, -6cm);
    \coordinate (F3B\i) at (\next cm, -6cm);
    \coordinate (F3C\i) at ($1/2*(F3A\i) + 1/2*(F3B\i) + (0cm,0.707106cm)$);
  }

  \foreach \i in {1,2}{
    \filldraw (T3A\i) circle (1pt);
    \filldraw (T3B\i) circle (1pt);
    \filldraw (T3C\i) circle (1pt);

    \draw[thick, arrows={-latex}] (T3A\i) -- (T3B\i);
    \draw[thick, arrows={-latex}] (T3B\i) -- (T3C\i);
  }
  \draw[thick, arrows={-latex}] (T3A1) -- (T3C1);
  \draw[thick, arrows={-latex}] (T3C2) -- (T3A2);

  \node[below] at ($1/2*(T3A1) + 1/2*(T3B1)$) {$\Tr_3$};
  \node[below] at ($1/2*(T3A2) + 1/2*(T3B2)$) {$\vec C_3$};
  
  \foreach \i in {0,...,3}{
    \filldraw (T4A\i) circle (1pt);
    \filldraw (T4B\i) circle (1pt);
    \filldraw (T4C\i) circle (1pt);
    \filldraw (T4D\i) circle (1pt);

    \draw[thick, arrows={-latex}] (T4A\i) -- (T4B\i);
    \draw[thick, arrows={-latex}] (T4B\i) -- (T4C\i);
  }
  \draw[thick, arrows={-latex}] (T4A0) -- (T4C0);
  \draw[thick, arrows={latex-}] (T4A1) -- (T4C1);
  \draw[thick, arrows={latex-}] (T4A2) -- (T4C2);
  \draw[thick, arrows={latex-}] (T4A3) -- (T4C3);

  \draw[thick, arrows={-latex}] (T4A0) -- (T4D0);
  \draw[thick, arrows={-latex}] (T4B0) -- (T4D0);
  \draw[thick, arrows={-latex}] (T4C0) -- (T4D0);

  \draw[thick, arrows={latex-}] (T4A1) -- (T4D1);
  \draw[thick, arrows={latex-}] (T4B1) -- (T4D1);
  \draw[thick, arrows={latex-}] (T4C1) -- (T4D1);

  \draw[thick, arrows={-latex}] (T4A2) -- (T4D2);
  \draw[thick, arrows={-latex}] (T4B2) -- (T4D2);
  \draw[thick, arrows={-latex}] (T4C2) -- (T4D2);

  \draw[thick, arrows={-latex}] (T4A3) -- (T4D3);
  \draw[thick, arrows={-latex}] (T4B3) -- (T4D3);
  \draw[thick, arrows={latex-}] (T4C3) -- (T4D3);

  \node[below] at ($1/2*(T4A0) + 1/2*(T4B0)$) {$\Tr_4$};
  \node[below] at ($1/2*(T4A1) + 1/2*(T4B1)$) {$W_4$};
  \node[below] at ($1/2*(T4A2) + 1/2*(T4B2)$) {$L_4$};
  \node[below] at ($1/2*(T4A3) + 1/2*(T4B3)$) {$R_4$};

  \filldraw (TpB1) circle (1pt);
  \filldraw (TpA2) circle (1pt);
  \filldraw (TpC2) circle (1pt);

  \draw[thick, arrows={-latex}] (TpA2) -- (TpC2);

  \node[left] at (TpB1) {$1$};
  \node[left] at (TpA2) {$1$};
  \node[right] at (TpC2) {$2$};

  \node[below] at (TpB1) {$1$};
  \node[below] at (TpB2) {$A$};

  \filldraw (F2A1) circle (1pt);
  \filldraw (F2B1) circle (1pt);
  \filldraw (F2A2) circle (1pt);
  \filldraw (F2B2) circle (1pt);  

  \draw[thick, arrows={-latex}] (F2A1) -- (F2B1);
  \draw[thick, arrows={latex-}] (F2A2) -- (F2B2);

  \node[left] at (F2A1) {$1$};
  \node[left] at (F2A2) {$1$};

  \node[below] at ($1/2*(F2A1) + 1/2*(F2B1)$) {$\alpha$};
  \node[below] at ($1/2*(F2A2) + 1/2*(F2B2)$) {$\beta$};

  \foreach \i in {0,...,3}{
    \filldraw (F3A\i) circle (1pt);
    \filldraw (F3B\i) circle (1pt);
    \filldraw (F3C\i) circle (1pt);

    \draw[thick, arrows={-latex}] (F3A\i) -- (F3B\i);

    \node[left] at (F3A\i) {$1$};
    \node[right] at (F3B\i) {$2$};
  }
  \draw[thick, arrows={-latex}] (F3A0) -- (F3C0);
  \draw[thick, arrows={-latex}] (F3B0) -- (F3C0);

  \draw[thick, arrows={latex-}] (F3A1) -- (F3C1);
  \draw[thick, arrows={latex-}] (F3B1) -- (F3C1);

  \draw[thick, arrows={-latex}] (F3A2) -- (F3C2);
  \draw[thick, arrows={latex-}] (F3B2) -- (F3C2);

  \draw[thick, arrows={latex-}] (F3A3) -- (F3C3);
  \draw[thick, arrows={-latex}] (F3B3) -- (F3C3);

  \node[below] at ($1/2*(F3A0) + 1/2*(F3B0)$) {$O^A$};
  \node[below] at ($1/2*(F3A1) + 1/2*(F3B1)$) {$I^A$};
  \node[below] at ($1/2*(F3A2) + 1/2*(F3B2)$) {$\Tr_3^A$};
  \node[below] at ($1/2*(F3A3) + 1/2*(F3B3)$) {$\vec C_3^A$};
\end{tikzpicture}
    \caption{Types and flags used.}
    \label{fig:typesandflags}
  \end{center}
\end{figure}

We have already introduced the notation~$\Tr_k$ to denote the transitive tournament
of order~$k$ and the notation for all the other tournaments of orders~$3$ and~$4$,
but we repeat them below for the readers convenience.
\begin{itemize}
\item The tournament~$\vec C_3$ is the~$3$-directed cycle;
\item The tournament~$R_4$ is the (unique) tournament of order~$4$ that has outdegree
  sequence~$(1,1,2,2)$;
\item The tournament~$W_4$ is the (unique) \emph{non-transitive} tournament of
  order~$4$ that has a vertex with outdegree~$3$ (that is, there is a ``winner''
  in~$W_4$);
\item The tournament~$L_4$ is the (unique) \emph{non-transitive} tournament of
  order~$4$ that has a vertex with indegree~$3$ (that is, there is a ``loser''
  in~$L_4$).
\end{itemize}

We define the~$1$-flag~$\alpha$ as the (unique)~$1$-flag of order~$2$ in which the
labelled vertex beats the unlabelled vertex and~$\beta$ as the other~$1$-flag of
order~$2$. We also define the following~$A$-flags of order~$3$.
\begin{itemize}
\item The flag~$O^A$, in which the only unlabelled vertex is beaten by both labelled
  vertices;
\item The flag~$I^A$, in which the only unlabelled vertex beats both labelled
  vertices;
\item The flag~$\Tr_3^A$, which is the only remaining~$A$-flag whose underlying model
  is~$\Tr_3$;
\item The flag~$\vec C_3^A$, which is the only~$A$-flag whose underlying model
  is~$\vec C_3$.
\end{itemize}
This is the complete list of~$A$-flags of order~$3$.

We also follow the original notation of flag algebras when using the downward
operator~$\llb{}\cdot{}\rrb_\sigma$ to the~$0$-algebra or when
using~$\sigma$-extensions of homomorphisms~$\phi\in\Hom^+(\cA^0,\RR)$ (which are
denoted by~$\bm{\phi^\sigma}$). We remind that~$\bm{\phi^\sigma}$ can be conveniently
viewed~\cite[Definition~10]{R:FlagAlgebras} as the
unique~$\Hom^+(\cA^\sigma,\RR)$-valued random variables satisfying the identities
\begin{align}\label{extensions}
\expect{\bm{\phi^\sigma}(F)} & =
\frac{\phi(\llb F\rrb_\sigma)}{\phi(\llbracket\One_\sigma\rrbracket_\sigma)}
\end{align}
for every~$F\in\cF^\sigma$.

Finally, we recall a very useful way to obtain the probability measure
of~$\bm{\phi^\sigma}$.

If~$F$ is a~$0$-flag and~$\sigma$ is a type such that~$p(\sigma,F)>0$ (when
regarding~$\sigma$ as a~$0$-flag), then we consider the following random
experiment. Choose uniformly at random an embedding~$\bm{\theta}$ of~$\sigma$ in~$F$
and for every Borel subset~$A$ of~$[0,1]^{\cF^\sigma}$, define
(see~\cite[Definition~9]{R:FlagAlgebras})
\begin{align*}
  \PP^\sigma_F(A) & = \PP(p^{(F,\bm{\theta})} \in A),
\end{align*}
where~$p^F$ denotes the linear functional~$p({{}\cdot{}},F)$, which can be regarded
as a point of~$[0,1]^{\cF^\sigma}$.

Recall~\cite[Theorem~3.12]{R:FlagAlgebras} that if~$(F_n)_{n\in\NN}$ is a convergent
sequence converging to~$\phi$, then the sequence of probability
measures~$(\PP^\sigma_{F_n})_{n\in\NN}$ on Borel subsets of~$[0,1]^{\cF^\sigma}$
weakly converges to the probability measure~$\PP^\sigma$ of~$\bm{\phi^\sigma}$.

We will not need these concepts in the more complicated scenario when the smaller
type is also non-trivial.

In this note, the most useful property of weak convergence of probability measures is
the following.

\begin{proposition}\label{prop:weakconv}
  If~$X$ is a metrizable space, $\PP$ is a Borel probability measure on~$X$
  and~$(\PP_n)_{n\in\NN}$ is a sequence of Borel probability measures on~$X$, then
  the following are equivalent.
  \begin{itemize}
  \item The sequence~$(\PP_n)_{n\in\NN}$ weakly converges to~$\PP$;
  \item For every~$A\subset X$ with~$\PP(\delta A)=0$ (where~$\delta A$ is the
    boundary of~$A$), we have
    \begin{align*}
      \lim_{n\to\infty}\PP_n(A) & = \PP(A);
    \end{align*}
  \item For every~$A\subset X$ open, we have
    \begin{align*}
      \liminf_{n\to\infty}\PP_n(A) & \geq \PP(A).
    \end{align*}
  \end{itemize}
\end{proposition}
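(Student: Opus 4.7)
The plan is to prove the three-way equivalence by the cyclic implications: weak convergence~$\Rightarrow$ open-set inequality~$\Rightarrow$ continuity-set equality~$\Rightarrow$ weak convergence. For the first implication, I would use metrizability to build continuous lower approximations of indicators: fix a compatible metric~$d$ on~$X$ and, for an open set~$A\subset X$ and~$k\in\NN$, set~$f_k(x) = \min\{1,k\cdot d(x,X\setminus A)\}$ (with $f_k\equiv 1$ if~$A=X$). Each~$f_k$ is bounded and continuous, $0\leq f_k\leq \mathbbm{1}_A$, and~$f_k\uparrow \mathbbm{1}_A$ pointwise. Weak convergence then yields~$\int f_k\,d\PP_n\to \int f_k\,d\PP$, hence~$\liminf_n \PP_n(A)\geq \int f_k\,d\PP$, and letting~$k\to\infty$ together with monotone convergence gives~$\liminf_n \PP_n(A)\geq \PP(A)$.

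For the second implication, I would rewrite the open-set inequality in the equivalent closed-set form~$\limsup_n \PP_n(F)\leq \PP(F)$ (by taking complements in~$X$). Given~$A$ with~$\PP(\delta A)=0$, we have~$\PP(A^\circ)=\PP(\bar A)=\PP(A)$, so sandwiching~$\PP_n(A^\circ)\leq \PP_n(A)\leq \PP_n(\bar A)$ and applying both inequalities to the flanks forces~$\lim_n \PP_n(A)=\PP(A)$. For the third implication, take an arbitrary bounded continuous~$f\colon X\to\RR$ and, by translating and scaling, assume~$0\leq f<M$. The layer-cake formula gives~$\int f\,d\mu=\int_0^M\mu(\{f>t\})\,dt$ for any probability measure~$\mu$. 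Because~$f$ is continuous, $\delta\{f>t\}\subset \{f=t\}$, and since~$\PP\circ f^{-1}$ is a finite Borel measure on~$\RR$, the set of~$t$ with~$\PP(f=t)>0$ is at most countable. For every other~$t$ the set~$\{f>t\}$ is a continuity set, so~$\PP_n(f>t)\to \PP(f>t)$, and bounded convergence under the~$dt$-integral yields~$\int f\,d\PP_n\to \int f\,d\PP$, establishing weak convergence.

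The main obstacle is the first implication: it is the only step in which metrizability is genuinely used, namely through the Lipschitz approximants of~$\mathbbm{1}_A$. The remaining two implications are formal measure-theoretic arguments that need no further structure on~$X$ beyond the fact that Borel probability measures are well defined. I expect no subtle difficulty there, only some bookkeeping to rule out the countably many levels~$t$ where the distribution of~$f$ has an atom in the third implication.
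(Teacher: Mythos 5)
The paper gives no proof of this proposition: it is stated as background, being the classical Portmanteau theorem for weak convergence of Borel probability measures on a metrizable space, and the text moves on immediately after the statement. There is therefore nothing in the paper to compare your argument against.

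That said, your proposal is a correct and complete proof, and it is the standard textbook argument. The cyclic structure (weak convergence $\Rightarrow$ open-set lower bound $\Rightarrow$ continuity-set limit $\Rightarrow$ weak convergence) is sound; the truncated-distance Lipschitz approximants $f_k(x) = \min\{1, k\,d(x, X\setminus A)\}$ increase to $\mathbbm{1}_A$ precisely because $A$ is open, which is where metrizability is genuinely used; the passage to the closed-set $\limsup$ form by complementation together with the sandwich $\PP_n(A^\circ) \leq \PP_n(A) \leq \PP_n(\bar A)$ and $\PP(A^\circ) = \PP(A) = \PP(\bar A)$ (from $\PP(\delta A) = 0$) is correct; and in the final step the inclusion $\delta\{f > t\} \subseteq \{f = t\}$ for continuous $f$, combined with the fact that a finite Borel measure on $\RR$ has at most countably many atoms, justifies applying the continuity-set hypothesis at almost every level $t$ before invoking bounded convergence in the layer-cake integral. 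No gaps.
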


We have already introduced the notation~$\phiqr$ to denote the homomorphism
of~$\Hom^+(\cA^0,\RR)$ corresponding to the random tournament, that is, it is the
almost sure limit of the sequence of random tournaments~$(\bm{T_{1/2}(n)})_{n\in\NN}$
(where each arc orientation is picked independently at random with probability~$1/2$)
when the number of vertices goes to infinity.

\smallskip

As we said in the introduction, the~$Q$ properties of
Chung--Graham~\cite{CG:QuasiRandomTournaments} of a sequence of
tournaments~$(T_n)_{n\in\NN}$ with~$\lv V(T_n)\rv = n$ that we are interested in are
the following.
\begin{itemize}
\item $Q_1$: $\lim_{n\to\infty}p(\Tr_3,T_n) = 3/4$ and~$\lim_{n\to\infty}p(\vec
  C_3,T_n) = 1/4$;
\item $Q_2$: $p(\vec C_3,T_n)$ is asymptotically maximized by the
  sequence~$(T_n)_{n\in\NN}$;
\item $Q_3$: The sequence of tournaments~$(T_n)_{n\in\NN}$ of increasing orders is
  \emph{almost balanced}, that is, all but~$o(n)$ vertices of~$T_n$ have
  outdegree~$(1/2 + o(1))n$.
\end{itemize}

If we assume that this sequence converges to a
homomorphism~$\phi\in\Hom^+(\cA^0,\RR)$, then these properties are translated to the
following properties of~$\phi$.
\begin{itemize}
\item $Q_1$: $\phi(\Tr_3) = 3/4$ and~$\phi(\vec C_3) = 1/4$;
\item $Q_2$: $\phi(\vec C_3)$ is maximum, i.e., we have~$\phi(\vec C_3) =
  \max\{\psi(\vec C_ 3) : \psi\in\Hom^+(\cA^0,\RR)\}$;
\item $Q_3$: $\bm{\phi^1}(\alpha) = 1/2$ a.s.
\end{itemize}

Note that since~$\vec C_3+\Tr_3 = \One_0$, it is enough to check only one of the
values in~$Q_1$. Furthermore, since~$\alpha + \beta = \One_1$, we immediately get
that~$Q_3$ is equivalent to~$\bm{\phi^1}(\beta) = 1/2$ a.s.\ and equivalent
to~$\bm{\phi^1}(\alpha) = \bm{\phi^1}(\beta)$ a.s.

We call a homomorphism~$\phi\in\Hom^+(\cA^0,\RR)$ \emph{balanced} if it
satisfies any (and therefore all) of these properties.

We now prove a small lemma that adds one other item to this list of properties.

\begin{lemma}\label{lem:Tr4R4}
  In the theory of tournaments, if~$\phi\in\Hom^+(\cA^0,\RR)$,
  then~$\phi(\Tr_4)\geq\phi(R_4)$ with equality if and only~$\phi$ is balanced.
\end{lemma}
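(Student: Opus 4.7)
The plan is to reduce the inequality to the nonnegativity of a variance by working in the $1$-flag algebra. The key is to establish
\[
\Tr_4 - R_4 \;=\; 4\llb \alpha^3 + \beta^3 \rrb_1 - \One \quad\text{in } \cA^0.
\]
To derive this, note that a $1$-flag on four vertices in which the labeled vertex beats all three unlabeled ones must have underlying model containing a vertex of out-degree~$3$. Checking the four tournaments of order~$4$, only $\Tr_4$ and $W_4$ qualify, each with a unique such vertex. Hence $\alpha^3$ is the sum of exactly two $1$-flags: the one with underlying model $\Tr_4$ (labeled at the source) and the one with underlying model $W_4$ (labeled at the ``winner''). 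Since for each such underlying $0$-flag exactly one of the four possible $1$-labelings yields the specified flag, the downward operator gives $\llb\alpha^3\rrb_1 = \tfrac14(\Tr_4 + W_4)$, and symmetrically $\llb\beta^3\rrb_1 = \tfrac14(\Tr_4 + L_4)$. Adding them and using the order-$4$ identity $\One = \Tr_4 + W_4 + L_4 + R_4$ gives the displayed equation.

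Applying $\phi$ and using~\eqref{extensions}, set $\rho \df \bm{\phi^1}(\alpha)\in[0,1]$, so that $\bm{\phi^1}(\beta) = 1 - \rho$. Then
\[
\phi(\Tr_4) - \phi(R_4) \;=\; 4\,\EE\!\left[\rho^3 + (1-\rho)^3\right] - 1.
\]
Because the unique $0$-flag of order~$2$ is the arc, $\llb\alpha\rrb_1 = \tfrac12\One$, and so $\EE[\rho] = \phi(\llb\alpha\rrb_1) = 1/2$ for every $\phi$. The algebraic identity $\rho^3 + (1-\rho)^3 = 1 - 3\rho(1-\rho)$ then simplifies the right-hand side above to
\[
12\bigl(\EE[\rho^2] - \tfrac14\bigr) \;=\; 12\,\EE\!\left[(\rho - \tfrac12)^2\right] \;\geq\; 0,
\]
proving $\phi(\Tr_4) \geq \phi(R_4)$. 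Equality holds exactly when $\rho = 1/2$ almost surely, i.e., $\bm{\phi^1}(\alpha) = 1/2$ a.s., which is condition~$Q_3$ for balancedness.

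The only substantive step is the flag-algebraic computation of $\alpha^3$, which amounts to a short case analysis of the two $4$-vertex tournaments having a vertex of out-degree~$3$; I do not foresee any genuine obstacle beyond this combinatorial bookkeeping.
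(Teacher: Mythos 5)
Your argument is correct, and it differs from the paper's one-line proof. The paper rewrites $\Tr_4 - R_4 = 1 - 4\vec C_3$ (the same order-$4$ identity you use, just phrased as $\vec C_3 = \tfrac14 + \tfrac14 R_4 - \tfrac14\Tr_4$) and then invokes the Chung--Graham result ($Q_1$ and $Q_2$) that $\phi(\vec C_3) \leq 1/4$ with equality exactly for balanced $\phi$. You instead lift to $\cA^1$, write $\Tr_4 - R_4 = 4\llb\alpha^3 + \beta^3\rrb_1 - \One$, and observe that applying $\phi$ yields $12\,\expect{(\bm{\phi^1}(\alpha) - \tfrac12)^2}$, a visibly nonnegative variance. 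The two derivations are algebraically equivalent---your computation is, under the hood, the standard second-moment proof of the Chung--Graham bound on $\phi(\vec C_3)$, specialized via the fact that $\expect{\bm{\phi^1}(\alpha)} = 1/2$ is forced for every $\phi$. What you gain is self-containedness: the inequality and the exact equality condition (namely $Q_3$: $\bm{\phi^1}(\alpha) = 1/2$ a.s.) fall out of a single variance identity without citing the external $Q_1\Leftrightarrow Q_2$ equivalence, at the cost of a slightly longer flag-algebra calculation. Your determination of $\alpha^3$, the downward projections $\llb\alpha^3\rrb_1 = \tfrac14(\Tr_4 + W_4)$ and $\llb\beta^3\rrb_1 = \tfrac14(\Tr_4 + L_4)$, and the use of \eqref{extensions} with $\phi(\llb\One_1\rrb_1)=1$ are all correct.
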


\begin{proof}
  It is easy to check the following flag~algebra identity.
  \begin{align*}
    \vec C_3 & = \frac{1}{4} + \frac{1}{4}R_4 - \frac{1}{4}\Tr_4.
  \end{align*}

  From~$Q_1$ and~$Q_2$, we know that~$\phi(\vec C_3)\leq 1/4$, with equality if and
  only if~$\phi$ is balanced; this directly implies that~$\phi(\Tr_4)\geq \phi(R_4)$,
  with equality if and only if~$\phi$ is balanced.
\end{proof}

\section{The Carousel Homomorphism}
\label{sec:phiR}

Stemming from Proposition~\ref{prop:loctranW4L4}, let us call a
homomorphism~$\phi\in\Hom^+(\cA^0,\RR)$ \emph{locally transitive} if we
have~$\phi(W_4+L_4)=0$.

Note that the fact that a sequence of tournaments~$(T_n)_{n\in\NN}$ converges to a
locally transitive homomorphism does \emph{not} imply that the tournaments are
locally transitive. Rather, it only implies that the density of~$W_4$ and~$L_4$ go to
zero as~$n$ goes to infinity, that is, the sequence is only \emph{asymptotically
  locally transitive}.

However, every locally transitive homomorphism~$\phi$ is also an algebra homomorphism
in the theory of locally transitive tournaments (i.e., the theory of tournaments that
avoid both~$W_4$ and~$L_4$), hence there exists a sequence of locally transitive
tournaments converging to~$\phi$.

Now we claim that the sequence of carousel tournaments~$(R_{2n+1})_{n\in\NN}$
is convergent, but we defer the proof of this claim to Section~\ref{sec:conv}. We
will call the limit of this sequence the \emph{carousel homomorphism} and we
will denote it by~$\phi_R$.

We now list a series of properties of a homomorphism~$\phi\in\Hom^+(\cA^0,\RR)$ that
we will prove to hold if and only if~$\phi=\phi_R$. Property~$S_1$ is stated just for
practical reasons and the equivalence of properties~$S_1$ and~$S_2$ implies
that~$\phi_R$ is the \emph{only} homomorphism that is both balanced and locally
transitive.

\begin{itemize}
\item $S_1$: $\phi = \phi_R$;
\item $S_2$: $\phi$ is balanced and locally transitive;
\item $S_3$: $\phi$ maximizes the density of~$R_4$, i.e., we have
  \begin{align*}
    \phi(R_4) & = \max\{\psi(R_4) : \psi\in\Hom^+(\cA^0,\RR)\};
  \end{align*}
\item $S_4$: $\phi$ maximizes the second moment of~$\bm{\phi^A}(\vec C_3^A)$.
\end{itemize}

For the next properties, it will be more practical to state them with free
parameters~$F$ and~$q$, which will be respectively an~$A$-algebra element and a real
number (not any element and real number!).
\begin{itemize}
\item $S_5(F,q)$: $\bm{\phi^A}(F)\sim U(0,q)$ (that is, the random
  variable~$\bm{\phi^A}(F)$ is uniformly distributed in~$[0,q]$);
\item $S_6(F,q)$: $\phi$ maximizes the second moment of~$\bm{\phi^A}(F)$ restricted
  to~$\expect{\bm{\phi^A}(F)}=q$, i.e., we have~$\expect{\bm{\phi^A}(F)}=q$ and
  \begin{align*}
    \expect{\bm{\phi^A}(F)^2} & =
    \max\{\expect{\bm{\psi^A}(F)^2} :
    \psi\in\Hom^+(\cA^0,\RR) \text{ with } \expect{\bm{\psi^A}(F)}=q\}.
  \end{align*}
\end{itemize}

We can now state the theorem.

\begin{theorem}\label{thm:charac}
  If~$F$ is an~$A$-flag of order~$3$ and~$G$ is either~$O^A+I^A$ or~$\vec C_3^A +
  \Tr_3^A$, then
  \begin{align*}
    S_1 & \Rightarrow S_2 \Rightarrow S_3 \Rightarrow S_4 
    \Rightarrow S_5(F,1/2) \Rightarrow S_6(F,1/4)
    \Rightarrow S_5(G,1) \Rightarrow S_6(G,1/2) \Rightarrow S_1.
  \end{align*}
\end{theorem}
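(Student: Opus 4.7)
The plan is to verify the eight implications in order. Two tools recur throughout: Lemma~\ref{lem:Tr4R4} together with the $0$-algebra identity $\Tr_4 + W_4 + L_4 + R_4 = \One_0$, and short enumerations over the four $4$-vertex tournaments yielding explicit flag-algebra identities of the form $\llb F \cdot F' \rrb_A = c_{R_4} R_4 + c_{\Tr_4} \Tr_4$ for $A$-flags $F, F'$ of order $3$.

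I would first dispatch the easier implications. $S_1 \Rightarrow S_2$ is immediate from Corollary~\ref{cor:Runique}: each $R_{2n+1}$ is balanced and locally transitive, so in the limit $\phi_R(W_4 + L_4) = 0$ and $\bm{\phi_R^1}(\alpha) = 1/2$ a.s. For $S_2 \Rightarrow S_3$, Lemma~\ref{lem:Tr4R4} gives $\phi(\Tr_4) = \phi(R_4)$, and combining with local transitivity and the $4$-vertex identity forces $\phi(R_4) = 1/2$; the same identity and lemma bound $\psi(R_4) \leq 1/2$ for every $\psi \in \Hom^+(\cA^0, \RR)$. For $S_3 \Rightarrow S_4$, a case check shows that the only $4$-vertex tournament possessing an arc whose two remaining vertices both lie in the $\vec C_3^A$-class with it is $R_4$; hence $\llb (\vec C_3^A)^2 \rrb_A$ is a positive scalar multiple of $R_4$, making $\EE[\bm{\phi^A}(\vec C_3^A)^2]$ proportional to $\phi(R_4)$, so the two extremal problems coincide.

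The technical heart is $S_4 \Rightarrow S_5(F, 1/2)$. The identity from the previous step forces $\phi(R_4) = 1/2$, and reversing the argument of $S_2 \Rightarrow S_3$ shows that $\phi$ must be balanced and locally transitive. To pin down the distribution of $\bm{\phi^A}(F)$ I would invoke Brouwer's characterization (Proposition~\ref{prop:loctranchar}): every sequence of locally transitive tournaments converging to $\phi$ admits cyclic orderings in which each $N^+$ is an interval, and almost-balancedness forces the normalized outdegrees along these orderings to concentrate at $1/2$. Each arc then has a well-defined cyclic length $k$, on which the four flag densities depend as explicit affine functions of $k/N$, matching exactly the direct computation inside $R_{2m+1}$. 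Uniform convergence of $k/N$ on $(0, 1/2]$ then yields $\bm{\phi^A}(F) \sim U(0, 1/2)$ for each order-$3$ $A$-flag $F$. This is where uniqueness of the carousel is really being used, and it is the main obstacle.

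The remaining implications follow the same template. $S_5(F, 1/2) \Rightarrow S_6(F, 1/4)$: $U(0, 1/2)$ has expectation $1/4$ and second moment $1/12$; the identity $\llb F^2 \rrb_A$ is a positive multiple of either $R_4$ or $\Tr_4$, and Lemma~\ref{lem:Tr4R4} certifies $1/12$ as the maximum second moment under the balancedness constraint $\EE[\bm{\phi^A}(F)] = 1/4$. $S_6(F, 1/4) \Rightarrow S_5(G, 1)$: maximality forces $\phi(R_4) = 1/2$, hence $\phi$ is balanced and locally transitive, so the structural argument delivers $\bm{\phi^A}(G) \sim U(0, 1)$ for each of the two choices of $G$ (the two cases are complementary, since $(O^A + I^A) + (\vec C_3^A + \Tr_3^A) = \One_A$ a.s.\ and $U(0, 1)$ is symmetric about $1/2$). $S_5(G, 1) \Rightarrow S_6(G, 1/2)$ is the corresponding moment check: expanding $\llb G^2 \rrb_A$ in the $4$-vertex basis and using balancedness $\phi(\Tr_4) = \phi(R_4)$ reduces it to a positive multiple of $R_4$, so the maximum second moment is $1/3$, matching that of $U(0, 1)$. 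Finally $S_6(G, 1/2) \Rightarrow S_1$ closes the loop: the constraint $\EE[\bm{\phi^A}(G)] = 1/2$ forces balancedness and maximality forces local transitivity via Lemma~\ref{lem:Tr4R4}, after which a last appeal to the structural uniqueness of $\phi_R$ yields $\phi = \phi_R$.
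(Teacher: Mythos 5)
Your proposal is correct and takes essentially the same route as the paper: the same flag identities (e.g.\ $\llb(\vec C_3^A)^2\rrb_A$ proportional to $R_4$, $\llb (O^A)^2\rrb_A$ and its siblings proportional to $\Tr_4$), the same use of Lemma~\ref{lem:Tr4R4}, and the same weak-convergence computation from the carousel sequence via Proposition~\ref{prop:loctranchar}. The only organizational difference worth mentioning: the paper proves each $S_i \iff S_1$ (Lemmas~\ref{lem:S12}--\ref{lem:S156G}), which makes the cyclic chain automatic, whereas you verify the eight one-way implications directly. These are equivalent strategies, but your direct approach slightly complicates the step $S_4 \Rightarrow S_5(F,1/2)$: since $S_4$ already forces $\phi(R_4)=1/2$ hence $\phi=\phi_R$ by Corollary~\ref{cor:maxR4} and Lemma~\ref{lem:S12}, one can compute the distribution of $\bm{\phi^A}(F)$ by picking a random arc of $R_{2n+1}$ and observing that the cyclic distance $\bm{i}$ is uniform on $[n]$, rather than arguing about an arbitrary sequence of locally transitive tournaments converging to $\phi$ as you sketch; the latter works but requires extra care about how much the intervals of Proposition~\ref{prop:loctranchar} can deviate from perfect halves. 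Your observation that $(O^A+I^A)+(\vec C_3^A+\Tr_3^A)=\One_A$ together with symmetry of $U(0,1)$ about $1/2$ lets you dispatch the two choices of $G$ from one another is a nice small economy not spelled out in the paper.
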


We will establish Theorem~\ref{thm:charac} through a series of lemmas, enlarging the
family of properties known to be equivalent after each lemma.

\begin{lemma}\label{lem:S12}
  We have~$S_1\iff S_2$.
\end{lemma}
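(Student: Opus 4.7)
The plan is to handle the two directions separately. The forward direction $S_1 \Rightarrow S_2$ is immediate: each $R_{2n+1}$ has every vertex of outdegree exactly $n$, so the sequence $(R_{2n+1})_{n\in\NN}$ is (exactly) balanced, and by Corollary~\ref{cor:Runique} each term is locally transitive, hence by Proposition~\ref{prop:loctranW4L4} contains no copy of $W_4$ or $L_4$; passing to the limit yields $\phi_R$ balanced and $\phi_R(W_4+L_4)=0$.

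For the substantive direction $S_2 \Rightarrow S_1$, suppose $\phi$ is balanced and locally transitive. Since $\phi(W_4+L_4)=0$, the observation from earlier in this section produces a sequence $(T_n)_{n\in\NN}$ of \emph{locally transitive} tournaments with $m_n := |V(T_n)| \to \infty$ converging to $\phi$; balancedness then forces this sequence to be almost balanced. I would then apply Proposition~\ref{prop:loctranchar} to fix, for each $n$, a cyclic ordering $v_0^n,\ldots,v_{m_n-1}^n$ of $V(T_n)$ in which $N^+(v_i^n) = \{v_{i+1}^n,\ldots,v_{i+d_i^n}^n\}$ (indices mod $m_n$) with $d_i^n = d^+(v_i^n)$; almost-balancedness says that all but $o(m_n)$ indices $i$ are \emph{typical}, satisfying $|d_i^n - m_n/2| = o(m_n)$.

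The key step will be to show $p(T, T_n) - p(T, R_{2n'+1}) \to 0$ for every fixed tournament $T$ of order $k$ and any choice of $n'$ with $|2n'+1 - m_n| \leq 1$. Both $T_n$ and $R_{2n'+1}$ have the cyclic-interval structure of Proposition~\ref{prop:loctranchar} with out-neighborhoods of size approximately $m_n/2$, so sampling $k$ vertices uniformly at random and identifying the resulting position sets yields isomorphic induced subtournaments except when either (i) the sample hits an atypical vertex of $T_n$, of probability $o(1)$, or (ii) some cyclic difference $(j_b - j_a) \bmod m_n$ between sampled indices falls in the $o(m_n)$-wide borderline window around $m_n/2$ where the two tournaments may disagree on the corresponding orientation---also $o(1)$ per pair. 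A union bound over the $\binom{k}{2}$ pairs then yields $p(T,T_n) - p(T,R_{2n'+1}) = o(1)$, and combining with the convergence of $(R_{2n+1})_{n\in\NN}$ to $\phi_R$ (proved in Section~\ref{sec:conv}) gives $\phi = \phi_R$. The main technical obstacle will be this comparison, blending the two $o(1)$ estimates above; the rest is bookkeeping.
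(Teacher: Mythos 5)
Your proposal is correct and follows essentially the same route as the paper: the forward direction is identical, and for $S_2 \Rightarrow S_1$ the paper likewise realizes $\phi$ by a sequence of genuinely locally transitive tournaments (via $\Hom^+(\cA^0[T_{\neg\{W_4,L_4\}}],\RR)$), applies the cyclic ordering of Proposition~\ref{prop:loctranchar}, and then argues that $T_n$ and the nearest carousel differ by $o(m_n^2)$ arcs so the limits coincide. Your random-sampling union bound over pairs is just a reformulation of the paper's ``flipping $o(m_n^2)$ arcs does not change the limit homomorphism,'' and both arguments equally rely on the convergence of $(R_{2n+1})_{n\in\NN}$ being established separately in Section~\ref{sec:conv}.
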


\begin{proof}
  Since~$R_{2n+1}$ is both balanced and locally transitive for every~$n\in\NN$, it
  follows that~$\phi_R$ is balanced and locally transitive.

  Suppose that~$\phi\in\Hom^+(\cA^0[T_{\text{Tournaments}}],\RR)$
  satisfies~$S_2$ and let~$T_{\neg\{W_4,L_4\}}$ be the theory of tournaments without
  any occurrence of~$W_4$ or~$L_4$ (i.e., the theory of locally transitive
  tournaments). Note that~$\phi$ is also an element
  of~$\Hom^+(\cA^0[T_{\neg\{W_4,L_4\}}],\RR)$, hence there exists a
  sequence~$(T_n)_{n\in\NN}$ of tournaments in~$T_{\neg\{W_4,L_4\}}$ converging
  to~$\phi$ and we can take this sequence to be such that~$\lv V(T_n)\rv$ is odd for
  every~$n\in\NN$.

  Since~$\phi$ is balanced, we know that all but~$o(\lv V(T_n)\rv)$ vertices of~$T_n$
  have outdegree~$(1/2 + o(1))\lv V(T_n)\rv$ hence, considering the cyclic ordering
  of~$T_n$ given by Proposition~\ref{prop:loctranchar}, we see that we can
  obtain~$R_{\lv V(T_n)\rv}$ from~$T_n$ by flipping~$o(\lv V(T_n)\rv^2)$ arcs
  of~$T_n$. Since this flipping operation does not change the limit homomorphism, we
  have that~$(T_n)_{n\in\NN}$ converges to the same limit as a subsequence
  of~$(R_{2n+1})_{n\in\NN}$. Therefore, we have~$\phi=\phi_R$.
\end{proof}

\begin{lemma}\label{lem:S13}
  We have~$S_1 \iff S_3$.
\end{lemma}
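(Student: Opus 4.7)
The plan is to derive a clean flag-algebra identity that writes $R_4$ as a positive combination of $\vec C_3$ and a negative combination of $W_4+L_4$, and then read off the maximum of $\phi(R_4)$ directly by combining the almost-balanced bound from property $Q_2$ with the trivial nonnegativity $\phi(W_4+L_4)\geq 0$.

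Concretely, first I would combine the identity $\Tr_4 + W_4 + L_4 + R_4 = \One_0$ at size~$4$ with the identity
\begin{align*}
\vec C_3 & = \frac{1}{4} + \frac{1}{4}R_4 - \frac{1}{4}\Tr_4
\end{align*}
already used in Lemma~\ref{lem:Tr4R4} to eliminate $\Tr_4$, obtaining
\begin{align*}
R_4 & = 2\vec C_3 - \frac{1}{2}(W_4 + L_4).
\end{align*}
One can also verify this identity directly by counting copies of $\vec C_3$ in each of the four size-$4$ tournaments ($0$ in $\Tr_4$, one each in $W_4$ and $L_4$, two in $R_4$), which may be safer than algebraic manipulation.

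With this identity in hand, for any $\phi \in \Hom^+(\cA^0,\RR)$ we get
\begin{align*}
\phi(R_4) & = 2\phi(\vec C_3) - \frac{1}{2}\phi(W_4+L_4) \;\leq\; 2\cdot\frac{1}{4} - 0 \;=\; \frac{1}{2},
\end{align*}
where the first bound uses property $Q_2$ and the second uses the nonnegativity of the densities $\phi(W_4),\phi(L_4)$. Equality in both estimates simultaneously characterizes $\phi$ as being balanced and locally transitive, i.e.\ as satisfying~$S_2$, which by Lemma~\ref{lem:S12} is equivalent to $\phi = \phi_R$. In particular, $\phi_R(R_4) = 1/2$, so the maximum is indeed attained and equals $1/2$.

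The two implications of the lemma then drop out at once: the forward direction $S_1 \Rightarrow S_3$ is the statement that $\phi_R$ attains the common upper bound~$1/2$, and the converse $S_3 \Rightarrow S_1$ is exactly the uniqueness of equality above. I do not anticipate any real obstacle here; the only place where a miscalculation could slip in is the derivation of the identity for~$R_4$, so I would cross-check it in two ways (symbolic elimination and direct counting) before proceeding.
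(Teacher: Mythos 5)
Your proof is correct and uses the same ingredients as the paper's: the identity relating~$\vec C_3$ to the order-$4$ tournaments, the Chung--Graham fact that~$\phi(\vec C_3)\le 1/4$ with equality precisely for balanced~$\phi$, nonnegativity of~$\phi(W_4+L_4)$, and Lemma~\ref{lem:S12}. The only difference is organizational: you eliminate~$\Tr_4$ to obtain~$R_4=2\vec C_3-\frac{1}{2}(W_4+L_4)$ and read off~$\phi(R_4)\le 1/2$ together with its equality case (balanced and locally transitive, i.e.\ $S_2$) in a single step, whereas the paper routes through Lemma~\ref{lem:Tr4R4} combined with~$\phi(\Tr_4+R_4)\le 1$, an equivalent rearrangement of the same two inequalities.
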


\begin{proof}
  Let us prove first that~$\phi_R$ satisfies~$S_3$.

  Note that Lemma~\ref{lem:Tr4R4} immediately gives that~$\phi(R_4)\leq 1/2$
  for every~$\phi\in\Hom^+(\cA^0,\RR)$.

  Since~$S_1\iff S_2$ by Lemma~\ref{lem:S12}, we have that~$\phi_R$ is balanced,
  hence Lemma~\ref{lem:Tr4R4} gives~$\phi_R(\Tr_4)=\phi_R(R_4)$. But also, we
  have~$\phi_R(W_4+L_4)=0$ by~$S_2$, hence~$\phi(\Tr_4+R_4)=1$, which
  implies~$\phi_R(R_4)=1/2$.

  Therefore~$S_1\implies S_3$.

  \medskip

  Suppose now that~$\phi\in\Hom^+(\cA^0,\RR)$ maximizes~$\phi(R_4)$. Then we must
  have~$\phi(R_4)=1/2$. On the other hand, since~$\phi(\Tr_4+R_4)\leq 1$, a double
  application of Lemma~\ref{lem:Tr4R4} implies that~$\phi(\Tr_4)=1/2$ and that~$\phi$
  is balanced, hence~$\phi$ satisfies~$S_2$ (since~$\phi(W_4+L_4) = 1 -
  \phi(\Tr_4+R_4)$).

  Therefore~$S_3\implies S_1$ (by Lemma~\ref{lem:S12}).
\end{proof}

Note that the proof of Lemma~\ref{lem:S13} also established the following corollary.
\begin{corollary}\label{cor:maxR4}
  In the theory of tournaments, if~$\phi\in\Hom^+(\cA^0,\RR)$, then~$\phi(R_4)\leq
  1/2$, with equality if and only if~$\phi=\phi_R$.
\end{corollary}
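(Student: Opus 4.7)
The plan is to extract the bound and equality condition directly from the ingredients already assembled in Lemma~\ref{lem:Tr4R4} and Lemma~\ref{lem:S13}, since those proofs already did essentially all the work.

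First I would establish the inequality. The four tournaments on four vertices are $\Tr_4, W_4, L_4, R_4$, so we have the identity $\Tr_4 + W_4 + L_4 + R_4 = \One_0$ in $\cA^0$. Applying $\phi$ gives $\phi(\Tr_4) + \phi(R_4) \leq 1$ (since $\phi(W_4 + L_4) \geq 0$). Combined with Lemma~\ref{lem:Tr4R4}, which says $\phi(\Tr_4) \geq \phi(R_4)$, we get $2\phi(R_4) \leq \phi(\Tr_4) + \phi(R_4) \leq 1$, and hence $\phi(R_4) \leq 1/2$.

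Next I would analyze the equality case. If $\phi(R_4) = 1/2$, both inequalities above must be equalities. The equality $\phi(\Tr_4) = \phi(R_4)$ forces $\phi$ to be balanced by the equality clause of Lemma~\ref{lem:Tr4R4}, and the equality $\phi(\Tr_4) + \phi(R_4) = 1$ forces $\phi(W_4 + L_4) = 0$, i.e.\ $\phi$ is locally transitive. Thus $\phi$ satisfies~$S_2$, and the equivalence $S_1 \iff S_2$ already proved in Lemma~\ref{lem:S12} gives $\phi = \phi_R$. Conversely, the forward direction of Lemma~\ref{lem:S13} shows that $\phi_R(R_4) = 1/2$, completing the characterization.

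There is no real obstacle here, since every ingredient is already in place: Lemma~\ref{lem:Tr4R4} provides the crucial link between $\phi(\Tr_4)$, $\phi(R_4)$, and balancedness, and Lemma~\ref{lem:S12} provides the uniqueness of the balanced locally transitive homomorphism. The only small care needed is to keep track of the two separate equality conditions being used simultaneously.
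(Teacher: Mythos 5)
Your proof is correct and follows essentially the same route as the paper, which extracts the corollary directly from the argument inside Lemma~\ref{lem:S13}: the bound via $\Tr_4 + W_4 + L_4 + R_4 = \One_0$ together with Lemma~\ref{lem:Tr4R4}, and the equality case by forcing balance and local transitivity and then invoking Lemma~\ref{lem:S12}. The only difference is that you spell out the two-inequality squeeze explicitly rather than leaning on the phrase ``Lemma~\ref{lem:Tr4R4} immediately gives,'' which is a harmless clarification.
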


Let us continue with the proof of Theorem~\ref{thm:charac}.

\begin{lemma}\label{lem:S14}
  We have~$S_1\iff S_4$.
\end{lemma}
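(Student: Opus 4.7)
The plan is to express the second moment $\expect{\bm{\phi^A}(\vec C_3^A)^2}$ as a positive scalar multiple of $\phi(R_4)$ and then invoke Corollary~\ref{cor:maxR4}, which already characterizes the maximizers of $\phi(R_4)$.

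First I would unfold the product $(\vec C_3^A)^2\in\cA^A$. By the flag algebra product formula, this expands as a linear combination of $A$-flags of order $4$ obtained by gluing two copies of $\vec C_3^A$ along the type $A$, indexed by the orientation of the single arc between the two unlabelled vertices. Writing the type $A$ as $u\to v$ and letting $w_1,w_2$ be the unlabelled vertices (so that $v\to w_i\to u$ for $i=1,2$), a direct check of the outdegree sequence shows that in either orientation of the arc $w_1w_2$ the underlying $4$-vertex model has outdegree sequence $(1,1,2,2)$, and hence equals $R_4$. In particular, $\llb (\vec C_3^A)^2\rrb_A = c\cdot R_4$ in $\cA^0$ for a strictly positive constant $c$, and no $\Tr_4$, $W_4$ or $L_4$ term appears.

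Next, I would identify the constant. Using~\eqref{extensions} together with the elementary fact $\phi(\llb\One_A\rrb_A)=1/2$ (the density of the labelled arc pattern $A$), a short double-counting argument in a tournament of order $n$ yields the closed form
$$\expect{\bm{\phi^A}(\vec C_3^A)^2} = \frac{\phi(R_4)}{6}.$$
The combinatorial content is that $R_4$ has exactly two $\vec C_3$'s, sharing exactly one common arc; this is the unique arc of $R_4$ admitting two distinct $\vec C_3$-completions, contributing the factor $1/6$.

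With this identity in hand, the lemma is immediate: by Corollary~\ref{cor:maxR4}, $\phi(R_4)\leq 1/2$ with equality iff $\phi=\phi_R$, so the second moment of $\bm{\phi^A}(\vec C_3^A)$ is maximized under exactly the same condition, giving $S_1\iff S_4$. The only real obstacle is the verification that $(\vec C_3^A)^2$ really only produces $R_4$-flags, but this reduces to a finite outdegree check on two $4$-vertex configurations and is therefore routine; the constant $1/6$ is then a bookkeeping exercise.
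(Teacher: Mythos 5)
Your proposal is correct and follows exactly the paper's route: the paper's proof consists of stating the identity $\expect{\bm{\phi^A}(\vec C_3^A)^2}=\phi(R_4)/6$ and appealing to Corollary~\ref{cor:maxR4}. You simply fill in the (routine but omitted) verification of that identity — expanding $(\vec C_3^A)^2$ into the single $A$-flag on $R_4$ anchored at the unique arc common to both $3$-cycles, and dividing by $\phi(\llb\One_A\rrb_A)=1/2$ — so there is no substantive difference from the paper's argument.
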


\begin{proof}
  Note that
  \begin{align*}
    \expect{\bm{\phi^A}(\vec C_3^A)^2} & = \frac{\phi(R_4)}{6},
  \end{align*}
  hence~$\phi$ maximizes the second moment of~$\bm{\phi^A}(\vec C_3^A)$ if and only
  if~$\phi$ maximizes the density of~$R_4$, so the result follows from
  Corollary~\ref{cor:maxR4}.
\end{proof}

\begin{lemma}\label{lem:S156F}
  If~$F$ is an~$A$-flag of order~$3$, then~$S_1\iff S_5(F,1/2) \iff S_6(F,1/4)$.
\end{lemma}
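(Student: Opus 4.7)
The plan is to establish a cycle $S_1 \Rightarrow S_5(F,1/2) \Rightarrow S_6(F,1/4) \Rightarrow S_1$, with everything hinging on a pair of moment identities in~$\cA^A$.

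The preparatory step computes, for each $A$-flag~$F$ of order~$3$,
\begin{align*}
  \expect{\bm{\phi^A}(F)} & = \begin{cases} \phi(\Tr_3)/3, & F\in\{O^A,I^A,\Tr_3^A\},\\ \phi(\vec C_3), & F = \vec C_3^A, \end{cases} \\
  \expect{\bm{\phi^A}(F)^2} & = \begin{cases} \phi(\Tr_4)/6, & F\in\{O^A,I^A,\Tr_3^A\},\\ \phi(R_4)/6, & F = \vec C_3^A. \end{cases}
\end{align*}
The first moment comes from unlabelling~$F$ via~$\llb\cdot\rrb_A$; the second from expanding $F\cdot F$ in~$\cA^A$, which glues two copies of~$F$ along the labelled arc and retains one unlabelled vertex from each copy. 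A short outdegree check on the two possible orientations between the unlabelled vertices shows that the resulting $4$-vertex tournament is~$\Tr_4$ when $F\in\{O^A,I^A,\Tr_3^A\}$ and~$R_4$ when $F=\vec C_3^A$, with combinatorial factor~$1/6$ in either case.

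For $S_1 \Rightarrow S_5(F,1/2)$, I use the convergence $R_{2n+1}\to\phi_R$ of Section~\ref{sec:conv} together with Proposition~\ref{prop:weakconv}. By the cyclic vertex-transitivity of~$R_{2n+1}$, a uniformly random arc can be taken to be~$(0,i)$ with~$i$ uniform on~$\{1,\ldots,n\}$, and a direct intersection count inside $\ZZ_{2n+1}$ shows that $F(0,i)$ equals~$i/(2n-1)$, $(i-1)/(2n-1)$, or $(n-i)/(2n-1)$, depending on which of the four $A$-flags $F$ is. In each case the empirical law of $F(0,i)$ converges weakly to $U(0,1/2)$, and Proposition~\ref{prop:weakconv} applied to the sequence $\PP^A_{R_{2n+1}}$ lifts this to $\bm{\phi_R^A}(F)\sim U(0,1/2)$.

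The implications $S_5(F,1/2)\Rightarrow S_6(F,1/4)\Rightarrow S_1$ close the cycle through the same moment identities. A variable uniform on~$[0,1/2]$ has mean~$1/4$ and second moment~$1/12$. Conversely, if $\expect{\bm{\phi^A}(F)}=1/4$, then the first-moment identity reads either $\phi(\Tr_3)=3/4$ or $\phi(\vec C_3)=1/4$, both of which force $\phi$ to be balanced (property~$Q_1$); Lemma~\ref{lem:Tr4R4} then gives $\phi(\Tr_4)=\phi(R_4)$, and Corollary~\ref{cor:maxR4} bounds both by~$1/2$. So the second-moment identity yields $\expect{\bm{\phi^A}(F)^2}\leq 1/12$ with equality iff $\phi(R_4)=1/2$, that is, iff $\phi=\phi_R$. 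This simultaneously shows that $\phi_R$ is a maximizer (so $S_5(F,1/2)\Rightarrow S_6(F,1/4)$) and that it is the unique maximizer (so $S_6(F,1/4)\Rightarrow S_1$).

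The main obstacle is the bookkeeping in the moment identities: one must correctly identify the $4$-vertex tournament underlying each $F\cdot F$ and extract the combinatorial factor by counting (arc, ordered pair of third vertices) configurations inside a single copy of $\Tr_4$ or $R_4$. Once those are in hand, the rest is a clean assembly of Lemma~\ref{lem:Tr4R4}, Corollary~\ref{cor:maxR4}, Proposition~\ref{prop:weakconv}, and the (still to be proved) convergence of $(R_{2n+1})_{n\in\NN}$.
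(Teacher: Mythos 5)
Your proposal is correct and follows essentially the same route as the paper's proof: the cyclic-transitivity computation in $R_{2n+1}$ for $S_1 \Rightarrow S_5(F,1/2)$, the first-moment identity forcing balancedness, and the second-moment identity $\expect{\bm{\phi^A}(F)^2} = \phi(R_4)/6$ (via Lemma~\ref{lem:Tr4R4} and Corollary~\ref{cor:maxR4}) for the remaining implications are exactly the ingredients the paper uses, with the moment identities made a bit more explicit in your write-up than in the paper's inline presentation.
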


\begin{proof}
  Let us first prove that~$S_1$ implies~$S_5(F,1/2)$.

  Let~$\PP^A$ be the Borel probability measure of~$\bm{\phi_R^A}$ and for
  every~$a\leq b$, let
  \begin{align*}
    B_{a,b}(F) & = \{x\in[0,1]^{\cF^A} : a < x_F < b\}.
  \end{align*}

  Note that~$B_{a,b}$ is an open subset
  of~$[0,1]^{\cF^A}$. Since~$(\PP^A_{R_{2n+1}})_{n\in\NN}$ weakly converges
  to~$\PP^A$, by Proposition~\ref{prop:weakconv}, it is enough to prove that
  \begin{align*}
    \liminf_{n\to\infty}\PP^A_{R_{2n+1}}(B_{a,b}(F)) & = 2(b-a),
  \end{align*}
  for every~$0\leq a\leq b\leq 1/2$; and
  \begin{align*}
    \liminf_{n\to\infty}\PP^A_{R_{2n+1}}(B_{a,b}(F)) & = 1-2a,
  \end{align*}
  for every~$0\leq a\leq 1/2\leq b\leq 1$.

  Recall the definition of~$\PP^A_{R_{2n+1}}$: consider the random experiment where
  we pick at random an embedding~$\bm{\theta}$ of~$A$ in~$R_{2n+1}$, then we have
  \begin{align*}
    \PP^A_{R_{2n+1}}(B_{a,b}(F)) & = \PP(a < p(F,\bm{L_{2n+1}}) < b),
  \end{align*}
  where~$\bm{L_{2n+1}}$ is the random~$A$-flag~$(R_{2n+1},\bm{\theta})$.

  Note that since~$\bm{\theta}$ is an embedding of~$A$ in~$R_{2n+1}$, we must have
  \begin{align*}
    \bm{\theta}(2) & = (\bm{\theta}(1) + \bm{i}) \bmod (2n+1),
  \end{align*}
  for some (random)~$\bm{i}\in[n]$. Note also that from the symmetry of~$R_{2n+1}$,
  the variable~$\bm{i}$ has uniform distribution in~$[n]$.

  Let~$j\in[2n]$
  and~$\bm{J}=\{\bm{\theta}(1),\bm{\theta}(2),(\bm{\theta}(1)+j)\bmod(2n+1)\}$. Note that
  we have the following (see Figure~\ref{fig:Aspan}).
  \begin{itemize}
  \item If~$j < \bm{i}$, then~$\bm{J}$ induces an occurrence of~$\Tr_3^A$;
  \item If~$\bm{i} < j \leq n$, then~$\bm{J}$ induces an occurrence of~$O^A$;
  \item If~$n < j \leq \bm{i} + n$, then~$\bm{J}$ induces an occurrence of~$\vec C_3^A$;
  \item If~$\bm{i} + n < j$, then~$\bm{J}$ induces an occurrence of~$I^A$.
  \end{itemize}

  \begin{figure}[ht]
    \begin{center}
      \begin{tikzpicture}[scale=0.9]
\begingroup
\small
\def\shorten{0.2cm}
\def\toneangle{90}
\def\firststartangle{75}
\def\firstangle{40}
\def\firstendangle{5}
\def\ttwoangle{-10}
\def\secondstartangle{-25}
\def\secondangle{-55}
\def\tonepnangle{-70}
\def\secondendangle{-85}
\def\thirdstartangle{-95}
\def\thirdangle{-140}
\def\ttwopnangle{-170}
\def\thirdendangle{-185}
\def\fourthstartangle{-195}
\def\fourthangle{-225}
\def\fourthendangle{-255}

\def\inradius{2.2cm}
\def\pointradius{2.7cm}
\def\outradius{3.2cm}
\def\labelradius{3.9cm}

\coordinate (T1)   at (\toneangle:\inradius);
\coordinate (T2) at (\ttwoangle:\inradius);
\coordinate (T1PN)   at (\tonepnangle:\pointradius);
%\coordinate (T1MN) at (\tonemnangle:\pointradius);
\coordinate (T2PN)   at (\ttwopnangle:\pointradius);
%\coordinate (T2MN) at (\ttwomnangle:\pointradius);

\coordinate (LT1)   at (\toneangle:\inradius);
\coordinate (LT2) at (\ttwoangle:\inradius);
\coordinate (LT1PN)   at (\tonepnangle:\labelradius);
%\coordinate (LT1MN) at (\tonemnangle:\labelradius);
\coordinate (LT2PN)   at (\ttwopnangle:\outradius);
%\coordinate (LT2MN) at (\ttwomnangle:\labelradius);

\coordinate (FIRST) at (\firstangle:\inradius);
\coordinate (SECOND) at (\secondangle:\inradius);
\coordinate (THIRD) at (\thirdangle:\inradius);
\coordinate (FOURTH) at (\fourthangle:\inradius);

\foreach \p in {T1,T2,T1PN,T2PN}
\filldraw (\p) circle (2pt);

\foreach \a in {%
  \firststartangle,\firstendangle,
  \secondstartangle,\secondendangle,
  \thirdstartangle,\thirdendangle,
  \fourthstartangle,\fourthendangle%
}
\draw (\a:\inradius) -- (\a:\outradius);

\foreach \s/\e in {%
  \firststartangle/\firstendangle,
  \secondstartangle/\secondendangle,
  \thirdstartangle/\thirdendangle,
  \fourthstartangle/\fourthendangle%
}{
  \foreach \r in {\outradius,\inradius}
  \draw (\s:\r) arc (\s:\e:\r);
}

\draw[shorten >=\shorten,shorten <=\shorten, thick, arrows={-latex}] (T1) -- (FIRST);
\draw[shorten >=\shorten,shorten <=\shorten, thick, arrows={-latex}] (T1) -- (SECOND);
\draw[shorten >=\shorten,shorten <=\shorten, thick, arrows={-latex}] (THIRD) -- (T1);
\draw[shorten >=\shorten,shorten <=\shorten, thick, arrows={-latex}] (FOURTH) -- (T1);
\draw[shorten >=\shorten,shorten <=\shorten, thick, arrows={-latex}] (FIRST) -- (T2);
\draw[shorten >=\shorten,shorten <=\shorten, thick, arrows={-latex}] (T2) -- (SECOND);
\draw[shorten >=\shorten,shorten <=\shorten, thick, arrows={-latex}] (T2) -- (THIRD);
\draw[shorten >=\shorten,shorten <=\shorten, thick, arrows={-latex}] (FOURTH) -- (T2);

\draw[shorten >=\shorten,shorten <=\shorten, thick, dashed, arrows={-latex}] (T1) -- (T2);

\node[above] at (LT1) {$\bm{\theta}(1)$};
\node[right] at (LT2) {$\bm{\theta}(2) = (\bm{\theta}(1) + \bm{i}) \bmod (2n+1)$};
\node at (LT1PN) {$(\bm{\theta}(1)+n) \bmod (2n+1)$};
%\node at (LT1MN) {$(\bm{\theta}(1)-n) \bmod (2n+1)$};
\node[left] at (LT2PN) {$(\bm{\theta}(2)+n) \bmod (2n+1)$};
%\node at (LT1MN) {$(\bm{\theta}(2)-n) \bmod (2n+1)$};
\endgroup
\end{tikzpicture}
      \caption{Neighbourhoods of~$\bm{\theta}(1)$ and~$\bm{\theta}(2)$.}
      \label{fig:Aspan}
    \end{center}
  \end{figure}
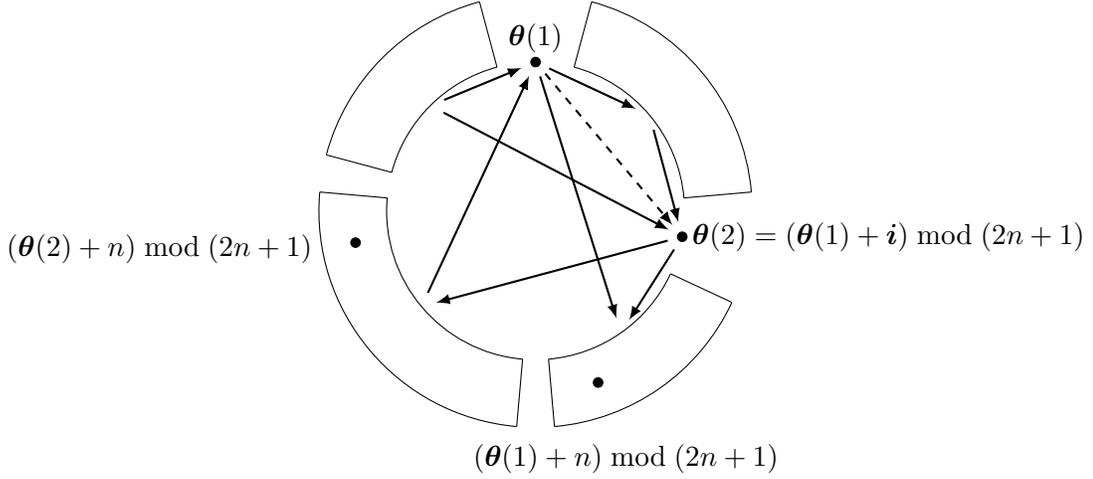

  This implies that
  \begin{align*}
    p(\Tr_3^A,\bm{L_{2n+1}}) & = \frac{\bm{i}-1}{2n-1}; &
    p(O^A,\bm{L_{2n+1}}) & = \frac{n-\bm{i}}{2n-1};\\
    p(\vec C_3^A,\bm{L_{2n+1}}) & = \frac{\bm{i}}{2n-1}; &
    p(I^A,\bm{L_{2n+1}}) & = \frac{n-\bm{i}}{2n-1}.
  \end{align*}

  Hence, since~$\bm{i}$ has uniform distribution over~$[n]$, we get
  that~$p(\Tr_3^A,\bm{L_{2n+1}})$, $p(O^A,\bm{L_{2n+1}})$ and~$p(I^A,\bm{L_{2n+1}})$
  have uniform distribution over~$\{t/(2n-1) : t\in
  \{0,1,\ldots,n-1\}\}$. Moreover~$p(\vec C_3^A,\bm{L_{2n+1}})$ has uniform
  distribution over~$\{t/(2n-1) : t\in[n]\}$.

  Letting~$n\to\infty$, it follows that
  \begin{align*}
    \liminf_{n\to\infty}\PP^A_{R_{2n+1}}(B_{a,b}(F)) & = 2(b-a),
  \end{align*}
  for every~$0\leq a\leq b\leq 1/2$; and
  \begin{align*}
    \liminf_{n\to\infty}\PP^A_{R_{2n+1}}(B_{a,b}(F)) & = 1-2a,
  \end{align*}
  for every~$0\leq a\leq 1/2\leq b\leq 1$ as desired.

  Therefore~$S_1\implies S_5(F,1/2)$.

  \medskip

  Now let us prove that~$S_5(F,1/2)\implies S_6(F,1/4)$.

  Suppose~$\psi$ is such that~$\expect{\bm{\psi^A}(F)}=1/4$.

  If~$F=\vec C_3^A$, then we have
  \begin{align*}
    \frac{1}{4} & = \expect{\bm{\psi^A}(F)} = \psi(\vec C_3),
  \end{align*}
  hence~$\psi$ is balanced.

  If~$F$ is one of~$O^A$, $I^A$ or~$\Tr_3^A$, then we have
  \begin{align*}
    \frac{1}{4} & = \expect{\bm{\psi^A}(F)} = \frac{\psi(\Tr_3)}{3},
  \end{align*}
  which yields~$\psi(\Tr_3)=3/4$, hence~$\psi$ is balanced.

  Therefore every~$\psi$ with~$\expect{\bm{\psi^A}(F)}=1/4$ must be balanced.

  Since the second moment of a~$U(0,1/2)$-random variable is~$1/12$, it is enough to
  prove that if~$\psi$ is balanced, then~$\bm{\psi^A}(F)\leq 1/12$.

  If~$F=\vec C_3^A$, then we have
  \begin{align}\label{eqn:2ndmomC3}
    \expect{\bm{\psi^A}(F)^2} & = \frac{\psi(R_4)}{6} \leq \frac{1}{12},
  \end{align}
  since the maximum value of~$\psi(R_4)$ is~$1/2$ (Corollary~\ref{cor:maxR4}).

  On the other hand, if~$F$ is one of~$O^A$, $I^A$ or~$\Tr_3^A$, then we have
  \begin{align}\label{eqn:2ndmomF}
    \expect{\bm{\psi^A}(F)^2} & = \frac{\psi(\Tr_4)}{6} = \frac{\psi(R_4)}{6}
    \leq \frac{1}{12},
  \end{align}
  by Lemma~\ref{lem:Tr4R4} and Corollary~\ref{cor:maxR4}.

  Therefore~$S_5(F,1/2)\implies S_6(F,1/4)$.

  \medskip

  Finally, let us prove that~$S_6(F,1/4)$ implies~$S_1$.

  If~$\phi$ satisfies~$S_6(F,1/4)$, we have already proved that it must be balanced
  (since~$\expect{\bm{\phi^A}(F)} = 1/4$) and from the equation part
  of~\eqref{eqn:2ndmomC3} and~\eqref{eqn:2ndmomF} and the fact that the second moment
  of a $U(0,1/2)$-random variable is~$1/12$, we have that~$\phi(R_4) \geq 1/2$,
  hence~$\phi=\phi_R$ by Corollary~\ref{cor:maxR4}.
\end{proof}

\begin{lemma}\label{lem:S156G}
  If~$G$ is either~$O^A+I^A$ or~$\vec C_3^A + \Tr_3^A$, then~$S_1\iff S_5(G,1)\iff
  S_6(G,1/2)$.
\end{lemma}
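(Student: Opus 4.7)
The plan is to mirror the proof of Lemma~\ref{lem:S156F}, now with the relevant distributions supported on $[0,1]$ rather than $[0,1/2]$ and with second moment $1/3$ rather than $1/12$. The cycle of implications $S_1\Rightarrow S_5(G,1)\Rightarrow S_6(G,1/2)\Rightarrow S_1$ will give the result.

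For $S_1\Rightarrow S_5(G,1)$, I reuse the explicit formulas for $p(F,\bm{L_{2n+1}})$ ($F$ an $A$-flag of order~$3$) derived in the proof of Lemma~\ref{lem:S156F}, which yield
\begin{align*}
p(O^A+I^A,\bm{L_{2n+1}}) & = \frac{2(n-\bm{i})}{2n-1}, &
p(\vec C_3^A+\Tr_3^A,\bm{L_{2n+1}}) & = \frac{2\bm{i}-1}{2n-1},
\end{align*}
for a uniformly random $\bm{i}\in[n]$. Both random variables are uniform over arithmetic progressions of $n$ points that fill $[0,1]$ as $n\to\infty$, so they converge in distribution to $U(0,1)$ and Proposition~\ref{prop:weakconv} yields $\bm{\phi_R^A}(G)\sim U(0,1)$.

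For $S_5(G,1)\Rightarrow S_6(G,1/2)$ (the crux of the argument), I first observe that $\expect{\bm{\psi^A}(G)}=1/2$ forces $\psi$ to be balanced: the first-moment identities already established in Lemma~\ref{lem:S156F} (namely $\expect{\bm{\psi^A}(O^A)}=\expect{\bm{\psi^A}(I^A)}=\expect{\bm{\psi^A}(\Tr_3^A)}=\psi(\Tr_3)/3$ and $\expect{\bm{\psi^A}(\vec C_3^A)}=\psi(\vec C_3)$), together with $\psi(\Tr_3)+\psi(\vec C_3)=1$, give $\psi(\Tr_3)=3/4$ in either case. Since $U(0,1)$ has second moment $1/3$, it suffices to prove $\expect{\bm{\psi^A}(G)^2}\leq 1/3$ for balanced $\psi$, with equality iff $\psi=\phi_R$. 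The key step is to compute the cross terms $\expect{\bm{\psi^A}(F_1)\bm{\psi^A}(F_2)}$ by counting ordered quadruples $(a,b,v,w)$ of distinct vertices satisfying the relevant arc patterns and classifying the resulting $4$-vertex subtournaments: for $O^A\cdot I^A$, such a quadruple induces either $\Tr_4$ or $R_4$ (according to the orientation of the last free arc), contributing one ordered representative per unlabelled copy in each case, so the cross term equals $(\psi(\Tr_4)+\psi(R_4))/12$; for $\vec C_3^A\cdot\Tr_3^A$, such a quadruple always induces $R_4$, with exactly two ordered representatives per copy, so the cross term equals $\psi(R_4)/6$. Combining these with the known second moments $\psi(\Tr_4)/6$ for $O^A,I^A,\Tr_3^A$ and $\psi(R_4)/6$ for $\vec C_3^A$ produces
\begin{align*}
\expect{\bm{\psi^A}(G)^2} & = \frac{3\psi(\Tr_4)+\psi(R_4)}{6} & & \text{if } G=O^A+I^A,\\
\expect{\bm{\psi^A}(G)^2} & = \frac{3\psi(R_4)+\psi(\Tr_4)}{6} & & \text{if } G=\vec C_3^A+\Tr_3^A.
\end{align*}
Under balancedness, Lemma~\ref{lem:Tr4R4} gives $\psi(\Tr_4)=\psi(R_4)$, so both expressions collapse to $2\psi(R_4)/3$, which is at most $1/3$ by Corollary~\ref{cor:maxR4}, with equality iff $\psi=\phi_R$.

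The implication $S_6(G,1/2)\Rightarrow S_1$ is then immediate: its first-moment constraint enforces balancedness as above, and the equality case just established pins down $\psi=\phi_R$. The main obstacle is the cross-term case analysis, in particular enumerating ordered representatives in each $4$-subtournament; once this is in hand, the rest of the argument parallels that of Lemma~\ref{lem:S156F} with updated constants.
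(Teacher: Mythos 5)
Your proof is correct and follows essentially the same route as the paper: the same explicit formulas for $p(F,\bm{L_{2n+1}})$ to get $S_1\Rightarrow S_5(G,1)$, the same first-moment argument forcing balancedness, and the same second-moment identities $\expect{\bm{\psi^A}(O^A+I^A)^2}=\psi(\Tr_4)/2+\psi(R_4)/6$ and $\expect{\bm{\psi^A}(\vec C_3^A+\Tr_3^A)^2}=\psi(\Tr_4)/6+\psi(R_4)/2$, which then collapse to $2\psi(R_4)/3\leq 1/3$ via Lemma~\ref{lem:Tr4R4} and Corollary~\ref{cor:maxR4}. The only difference is cosmetic: you derive those second-moment identities by expanding the square and counting ordered quadruples for the cross terms, whereas the paper simply states them as flag-algebra computations.
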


\begin{proof}
  (The proof is somewhat analogous to the proof of Lemma~\ref{lem:S156F}.)

  To prove that~$S_1\implies S_5(G,1)$, repeat the part~$S_1\implies S_5(F,1/2)$ of
  the proof of Lemma~\ref{lem:S156F} and note that since
  \begin{align*}
    p(\Tr_3^A,\bm{L_{2n+1}}) & = \frac{\bm{i}-1}{2n-1}; &
    p(O^A,\bm{L_{2n+1}}) & = \frac{n-\bm{i}}{2n-1};\\
    p(\vec C_3^A,\bm{L_{2n+1}}) & = \frac{\bm{i}}{2n-1}; &
    p(I^A,\bm{L_{2n+1}}) & = \frac{n-\bm{i}}{2n-1};
  \end{align*}
  we have that~$p(O^A+I^A,\bm{L_{2n+1}})$ has uniform distribution on~$\{2t/(2n-1) :
  t\in\{0,1,\ldots,n-1\}\}$ and that~$p(\vec C_3^A + \Tr_3^A,\bm{L_{2n+1}})$ has
  uniform distribution on~$\{(2t-1)/(2n-1) : t\in[n]\}$.

  Letting~$n\to\infty$, it follows that
  \begin{align*}
    \liminf_{n\to\infty}\PP^A_{R_{2n+1}}(a < p(F,\bm{L_{2n+1}}) < b) & = b-a,
  \end{align*}
  for every~$0\leq a\leq b\leq 1$, which implies~$S_5(G,1)$.

  \medskip

  Now let us prove that~$S_5(G,1)\implies S_6(G,1/2)$.

  Suppose~$\psi$ is such that~$\expect{\bm{\psi^A}(G)}=1/2$.

  If~$G=O^A+I^A$, then we have
  \begin{align*}
    \frac{1}{2} & = \expect{\bm{\psi^A}(G)} = \frac{2\psi(\Tr_3)}{3},
  \end{align*}
  which yields~$\psi(\Tr_3)=3/4$, hence~$\psi$ is balanced.

  If~$G=\vec C_3^A + \Tr_3^A$, then we have
  \begin{align*}
    \frac{1}{2} & = \expect{\bm{\psi^A}(G)} = \psi(\vec C_3) + \frac{\psi(\Tr_3)}{3}
    = \frac{1}{3} + \frac{2\psi(\vec C_3)}{3},
  \end{align*}
  which yields~$\psi(\vec C_3)=1/4$, hence~$\psi$ is balanced.

  Therefore every~$\psi$ with~$\expect{\bm{\psi^A}(G)}=1/2$ must be balanced.

  Since the second moment of a~$U(0,1)$-random variable is~$1/3$, it is enough to
  prove that if~$\psi$ is balanced, then~$\bm{\psi^A}(G)\leq 1/3$.

  But note that, if~$G=O^A+I^A$, then we have
  \begin{align}\label{eqn:2ndmomOAIA}
    \expect{\bm{\psi^A}(G)^2} & = \frac{\Tr_4}{2} + \frac{R_4}{6}
    = \frac{2R_4}{3} \leq \frac{1}{3},
  \end{align}
  by Lemma~\ref{lem:Tr4R4} and Corollary~\ref{cor:maxR4}.

  Furthermore, if~$G=\vec C_3^A + \Tr_3^A$, then we have
  \begin{align}\label{eqn:2ndmomC3ATr3A}
    \expect{\bm{\psi^A}(G)^2} & = \frac{\Tr_4}{6} + \frac{R_4}{2}
    = \frac{2R_4}{3} \leq \frac{1}{3},
  \end{align}
  also by Lemma~\ref{lem:Tr4R4} and Corollary~\ref{cor:maxR4}.

  Therefore~$S_5(G,1)\implies S_6(G,1/2)$.

  \medskip

  Finally, let us prove that~$S_6(G,1/2)$ implies~$S_1$.

  If~$\phi$ satisfies~$S_6(G,1/2)$, we have already proved that it must be balanced
  (since~$\expect{\bm{\phi^A}(G)}=1/2$) and from equation part
  of~\eqref{eqn:2ndmomOAIA} and~\eqref{eqn:2ndmomC3ATr3A} and the fact that the
  second moment of a $U(0,1)$-random variable is~$1/3$, we have that~$\phi(R_4) \geq
  1/2$, hence~$\phi=\phi_R$ by Corollary~\ref{cor:maxR4}.
\end{proof}

This finishes the proof of Theorem~\ref{thm:charac}.

\section{Convergence of the Sequence~$(R_{2n+1})_{n\in\NN}$}
\label{sec:conv}

We present now the proof that the sequence of carousel
tournaments~$(R_{2n+1})_{n\in\NN}$ is convergent. The proof can be obtained by
reinterpreting the proof of Lemma~\ref{lem:S12}.

\begin{proposition}
  The sequence~$(R_{2n+1})_{n\in\NN}$ is convergent.
\end{proposition}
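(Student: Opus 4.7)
The plan is to use compactness of~$\Hom^+(\cA^0, \RR)$ together with a reinterpretation of the flipping argument from the proof of Lemma~\ref{lem:S12}. By compactness of~$\Hom^+(\cA^0, \RR)$ in the product topology, the sequence~$(R_{2n+1})_{n\in\NN}$ has convergent subsequences, and it suffices to show that all subsequential limits coincide.

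First I would observe that every subsequential limit~$\phi$ of~$(R_{2n+1})_{n\in\NN}$ is both balanced and locally transitive: local transitivity follows from $p(W_4, R_{2n+1}) = p(L_4, R_{2n+1}) = 0$ for every~$n$, and balancedness follows from the fact that every vertex of~$R_{2n+1}$ has outdegree exactly~$n$, hence $\bm{\phi^1}(\alpha) = 1/2$ almost surely (property~$Q_3$).

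Then I would establish uniqueness by reinterpreting the flipping step of Lemma~\ref{lem:S12}. Let~$(R_{2n_k+1})_{k\in\NN}$ be a convergent subsequence with limit~$\phi_1$ and let~$\phi_2$ be any other subsequential limit. Since~$\phi_2$ is balanced and locally transitive, we have $\phi_2 \in \Hom^+(\cA^0[T_{\neg\{W_4, L_4\}}], \RR)$, so by the standard flag algebra fact that every homomorphism in a theory is the limit of model sequences of arbitrarily prescribed increasing orders, there exists a sequence of locally transitive tournaments~$(T_k)_{k\in\NN}$ with $|V(T_k)| = 2n_k+1$ converging to~$\phi_2$. Balancedness of~$\phi_2$ guarantees that all but~$o(n_k)$ vertices of~$T_k$ have outdegree~$(1/2 + o(1))(2n_k+1)$, so by Proposition~\ref{prop:loctranchar} the cyclic ordering of~$T_k$ agrees with the carousel ordering of~$R_{2n_k+1}$ outside~$o(n_k^2)$ arcs. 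Thus for every fixed tournament~$S$,
\begin{align*}
  \lv p(S, T_k) - p(S, R_{2n_k+1})\rv & = o(1),
\end{align*}
and taking $k\to\infty$ yields $\phi_2(S) = \lim_k p(S, T_k) = \lim_k p(S, R_{2n_k+1}) = \phi_1(S)$, so $\phi_1 = \phi_2$ as desired.

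The main obstacle is the existence of the auxiliary sequence~$(T_k)$ with orders matching those of the given convergent subsequence of~$R$'s; this rests on the standard flag algebra flexibility fact that for every homomorphism and every increasing sequence of admissible orders, one can build a sequence of models of those orders converging to it. Once this is in hand, the argument above shows that the (a priori arbitrary) subsequential limit~$\phi_2$ is forced to agree with~$\phi_1$ on every tournament~$S$, proving convergence of the full sequence.
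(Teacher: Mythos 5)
Your proposal is correct and follows essentially the same route as the paper: compactness gives subsequential limits, every such limit is balanced and locally transitive, and uniqueness is established by building an auxiliary sequence of locally transitive tournaments of carousel orders converging to any given subsequential limit, then invoking the flipping argument from Lemma~\ref{lem:S12}. The one spot where you wave your hands---the existence of $(T_k)$ with $\lv V(T_k)\rv = 2n_k+1$ converging to $\phi_2$---is exactly where the paper supplies the detail you flagged as the ``main obstacle'': it invokes Razborov's Theorem~3.3b, which produces random models of prescribed sizes converging a.s.\ to $\phi_2$ but requires those sizes to grow at least quadratically. Consequently the paper only matches a subsequence of the orders $\{2n_k+1\}$ (picking $f$ with $f(n)=\Omega(n^2)$ and $f(\NN)\subset\{2i+1:i\in I\}$), not every order as your phrasing suggests; this is harmless, since the flipping comparison then identifies $\phi_2$ with the limit along that subsequence of $(R_{2n_k+1})$, which is still $\phi_1$. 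So the ``flexibility fact'' as you stated it is a touch too strong, but the argument goes through after this small adjustment, and otherwise your proof matches the paper's.
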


\begin{proof}
  From compactness of~$[0,1]^{\cF^0}$, we know that~$(R_{2n+1})_{n\in\NN}$ must have
  a convergent subsequence, so for every infinite set~$I\subset\NN$ of indexes such
  that the subsequence~$(R_{2i+1})_{i\in I}$ converges,
  let~$\phi_I\in\Hom^+(\cA^0,\RR)$ be its limit. For convenience, let~$\cC$ be the
  set of all~$I\subset\NN$ such that~$(R_{2i+1})_{i\in I}$ converges.

  Now we repeat the proof of Lemma~\ref{lem:S12} using an arbitrary~$I\in\cC$.

  For the forward implication~$S_1\implies S_2$, since~$R_{2n+1}$ is both balanced
  and locally transitive, we have that~$\phi_I$ is balanced and locally transitive
  for every~$I\in\cC$.

  The proof of implication~$S_2\implies S_1$ proceeds a little bit differently: we
  pick the sequence~$(T_n)_{n\in\NN}$ of locally transitive tournaments converging
  to~$\phi$ to be such that
  \begin{align*}
    \{\lv V(T_n)\rv : n\in\NN\} & \subset \{2i+1 : i\in I\}.
  \end{align*}

  To see that this can be done, recall~\cite[Theorem~3.3b]{R:FlagAlgebras} that if we
  define the probability measure~$\PP_n$ over~$\cF^0_n$ as~$\PP_n(F) = \phi(F)$ and
  we pick independently at random for every~$n\in\NN$ the~$0$-flag~$\bm{F_n}$
  according to the measure~$\PP_{f(n)}$, where~$f(n) = \Omega(n^2)$, then the
  sequence~$(\bm{F_n})_{n\in\NN}$ converges almost surely to~$\phi$. Since~$I$
  infinite, we can certainly pick~$f$ such that both~$f(n)=\Omega(n^2)$
  and~$f(\NN)\subset \{2i+1 : i\in I\}$ hold. Thus almost every sample
  of~$(\bm{F_n})_{n\in\NN}$ is a desired sequence~$(T_n)_{n\in\NN}$.

  Again, since~$\phi$ is balanced, we know that we can obtain~$R_{\lv V(T_n)\rv}$
  from~$T_n$ by flipping~$o(\lv V(T_n)\rv^2)$ arcs of~$T_n$ and since this flipping
  operation does not change the limit homomorphism, we have that the
  sequence~$(T_n)_{n\in\NN}$ coverges to the same limit as a subsequence
  of~$(R_{2i+1})_{i\in I}$, hence~$\phi=\phi_I$.

  But this means that, if~$I,J\in\cC$, then, we have
  \begin{align*}
    S_2(\phi_J) \implies \phi_J = \phi_I,
  \end{align*}
  hence every convergent subsequence of~$(R_{2n+1})_{n\in\NN}$ converges to the same
  homomorphism, therefore it must be a convergent sequence from compactness
  of~$[0,1]^{\cF^0}$.
\end{proof}

We remark that the convergence of~$(R_{2n+1})_{n\in\NN}$ can also be proved directly
and that a limit of this sequence in the theory of digraphons
(see~\cite[Section~9]{DJ:GraphLimitsAndExchangeableRandomGraphs}) can be constructed
as follows.

\begin{proposition}\label{prop:digraphon}
  Using the quintuple definition of digraphons, let~$W_{00},W_{11}\:[0,1]^2\to[0,1]$
  be the identically zero functions on~$[0,1]^2$ and~$w\:[0,1]\to[0,1]$ be the
  identically zero function on~$[0,1]$. Furthermore, define the
  functions~$W_{01},W_{10}\:[0,1]^2\to[0,1]$ as follows (see
  Figure~\ref{fig:digraphon}).
  \begin{align*}
    W_{01}(y,x) & = W_{10}(x,y) =
    \begin{dcases*}
      1, & if~$(x-y)\bmod 1 < 1/2$;\\
      0, & if~$(x-y)\bmod 1 \geq 1/2$.
    \end{dcases*}
  \end{align*}

  Under these definitions, the sequence~$(R_{2n+1})_{n\in\NN}$ converges to the
  digraphon~$(W_{00},W_{01},W_{10},W_{11},w)\in\cW_5$, that is, for every
  tournament~$T$ with~$V(T)=[k]$, we have
  \begin{align*}
    \lim_{n\to\infty}p(T,T_n) & = \frac{k!}{\lv\Aut(T)\rv}
    \int_{[0,1]^k}\prod_{ij\in A(T)}W_{10}(x_i,x_j)dx_1dx_2\cdots dx_k,
  \end{align*}
  where~$\Aut(T)$ denotes the group of automorphisms of~$T$.
\end{proposition}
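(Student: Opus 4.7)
The plan is to realize each $R_{2n+1}$ as a step digraphon on $[0,1]$ and show that this sequence converges pointwise almost everywhere to $W=(W_{00},W_{01},W_{10},W_{11},w)$. Bounded convergence will then transfer that pointwise convergence into convergence of the relevant integral, and a standard counting argument will identify the integral with $p(T,R_{2n+1})$ up to the combinatorial factor $k!/\lv\Aut(T)\rv$.

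First I would partition $[0,1]$ into the $2n+1$ equal subintervals $I_0^{(n)},\ldots,I_{2n}^{(n)}$ and, using the embedding $k\mapsto(2n+1-k)/(2n+1)$ of $V(R_{2n+1})$ into $[0,1]$ (which reverses the naive embedding so as to align with the orientation convention of $W_{10}$; this relies on the self-duality $R_{2n+1}\cong R_{2n+1}^{\mathrm{rev}}$ provided by $k\mapsto -k\bmod(2n+1)$), associate to $R_{2n+1}$ the step digraphon $W^{(n)}$ whose off-diagonal entries are the appropriate arc-indicators, with $W^{(n)}_{00}=W^{(n)}_{11}=0$ and $w^{(n)}\equiv 0$ since tournaments have neither loops nor multi-arcs. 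For the pointwise convergence, fix $(x,y)\in[0,1]^2$ with $x\neq y$ and $(x-y)\bmod 1\neq 1/2$; writing $i_n=\lfloor(2n+1)x\rfloor$ and $j_n=\lfloor(2n+1)y\rfloor$, the arc condition $(i_n-j_n)\bmod(2n+1)\in[n]$ stabilizes for all sufficiently large $n$ to $(x-y)\bmod 1<1/2$, which shows $W^{(n)}_{10}(x,y)\to W_{10}(x,y)$ almost everywhere in $[0,1]^2$ and hence almost everywhere in $[0,1]^k$. Since each integrand lies in $[0,1]$, bounded convergence gives
\begin{equation*}
\int_{[0,1]^k}\prod_{ij\in A(T)}W^{(n)}_{10}(x_i,x_j)\,dx_1\cdots dx_k\;\longrightarrow\;\int_{[0,1]^k}\prod_{ij\in A(T)}W_{10}(x_i,x_j)\,dx_1\cdots dx_k.
\end{equation*}

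To close, I would evaluate the left-hand integral explicitly: since $T$ is a tournament and $W^{(n)}_{10}$ vanishes on pairs lying in the same block, the integrand is nonzero precisely when $(x_1,\ldots,x_k)$ lies in a product $I_{a_1}^{(n)}\times\cdots\times I_{a_k}^{(n)}$ of $k$ distinct blocks such that $i\mapsto a_i$ is an injective tournament embedding of $T$ into $R_{2n+1}$ (in the chosen orientation). Counting such embeddings as $\lv\Aut(T)\rv$ times the number of induced copies of $T$ in $R_{2n+1}$, the integral equals $\lv\Aut(T)\rv\cdot p(T,R_{2n+1})\cdot\binom{2n+1}{k}/(2n+1)^k$; combining this with $\binom{2n+1}{k}/(2n+1)^k\to 1/k!$ and the convergence of $p(T,R_{2n+1})$ established earlier in this section, the desired formula falls out of the display above. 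I expect the main obstacle to be the pointwise-convergence step: bookkeeping the modular arithmetic, reconciling the orientation convention of $W_{10}$ with the natural embedding (which is where the arc-reversal symmetry of $R_{2n+1}$ is invoked), and verifying that the exceptional set $\{(x,y):(x-y)\bmod 1\in\{0,1/2\}\}$ is Lebesgue-null and hence negligible in $[0,1]^k$.
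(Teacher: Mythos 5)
The paper states this proposition without proof (the text merely remarks that the convergence ``can also be proved directly,'' and the remark following the proposition only explains the factor $k!/\lv\Aut(T)\rv$), so there is no paper argument to compare yours against. Your proof is correct: the step-digraphon representation, pointwise a.e.\ convergence plus bounded convergence, and the exact embedding count
\begin{align*}
\int_{[0,1]^k}\prod_{ij\in A(T)}W^{(n)}_{10}(x_i,x_j)\,dx_1\cdots dx_k & = \lv\Aut(T)\rv\, p(T,R_{2n+1})\,\frac{\binom{2n+1}{k}}{(2n+1)^k}
\end{align*}
(exact, not merely asymptotic, because for a tournament $T$ any map $[k]\to V(R_{2n+1})$ preserving the arcs of $T$ is automatically injective and induced) together with $\binom{2n+1}{k}/(2n+1)^k\to 1/k!$ yield the stated formula; this incidentally re-proves convergence of $(p(T,R_{2n+1}))_{n}$, so the back-reference to the preceding proposition is dispensable. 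One small conceptual wrinkle worth smoothing: the reversed labeling $a\mapsto(2n+1-a)/(2n+1)$ does not ``rely on'' the self-duality $R_{2n+1}\cong R_{2n+1}^{\mathrm{rev}}$ --- it is simply an alternative choice of step-function representation, available for any tournament, made so that $W^{(n)}_{10}$ matches the sign convention of the target $W_{10}$. Self-duality (equivalently, invariance of $W$ under the measure-preserving involution $x\mapsto(-x)\bmod 1$) would be the fact to cite had you instead kept the naive labeling $a\mapsto a/(2n+1)$ and then needed to identify the resulting arc-reversed limit digraphon with the one in the statement. Either route is legitimate; just keep the roles straight so that the symmetry appears as a property of the target, not a prerequisite for constructing the step digraphons.
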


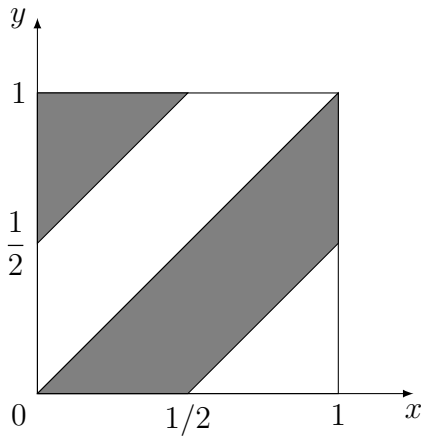
\begin{figure}[ht]
  \begin{center}
    \begin{tikzpicture}[scale=2]
  \foreach \i in {0,1,2}
  \foreach \j in {0,1,2}
  \coordinate (P\i\j) at (\i cm,\j cm);

  \coordinate (P03) at (0cm, 2.5cm);
  \coordinate (P30) at (2.5cm, 0cm);

  \draw (P00) -- (P02) -- (P22) -- (P20) -- cycle;
  \draw (P00) -- (P22);
  \filldraw[fill=gray] (P00) -- (P10) -- (P21) -- (P22) -- cycle;
  \filldraw[fill=gray] (P01) -- (P12) -- (P02) -- cycle;

  \draw[arrows={-latex}] (P00) -- (P03);
  \draw[arrows={-latex}] (P00) -- (P30);

  \node[below] at (P30) {$x$};
  \node[left] at (P03) {$y$};
  \node[below] at (P10) {$1/2$};
  \node[below] at (P20) {$1$};
  \node[left] at (P01) {$\displaystyle \frac{1}{2}$};
  \node[left] at (P02) {$1$};
  \node[below left] at (P00) {$0$};
\end{tikzpicture}
    \caption{The function~$W_{10}$ of Proposition~\ref{prop:digraphon}. The gray area
      represents where the function has value~$1$, the white area represents where
      the function has value~$0$.}
    \label{fig:digraphon}
  \end{center}
\end{figure}

\begin{remark*}
  The factor~$k!/\lv\Aut(T)\rv$ comes from the fact that~$p$ measures unlabelled
  subtournament density and the integral on the right-hand side measures labelled
  subtournament density.
\end{remark*}

\section{Concluding Remarks and Open Problems}
\label{sec:conc}

As we mentioned in the introduction, the problem of minimizing~$\phi(T)$ for a fixed
tournament~$T$ is completely closed but the analogous maximization problem is still
open for very small tournaments. Corollary~\ref{cor:maxR4} completely solves the
maximization of~$\phi(R_4)$, this leaves only one case of order~$4$ still open since
maximizing~$\phi(W_4)$ is analogous to maximizing~$\phi(L_4)$ by flipping all arcs.

For the particular problem of maximizing~$\phi(W_4)$, consider the following
construction (see Figure~\ref{fig:maxW4}). Let~$N$ be an arbitrarily large integer
and~$t\in(0,1)$. Define recursively the sequence~$A_0,A_1,\ldots$ by taking~$A_0 =
[N]$ and by letting~$A_i$ be a subset of~$A_{i-1}$ with size~$t\lv A_i\rv$ (rounded
to the nearest integer) for every~$i>0$. Define the random tournament~$\bm{S_{N,t}}$
through the following procedure: let~$V(\bm{S_{N,t}})=[N]=A_0$, for every~$i > 0$,
every~$v\in A_i$ and every~$w\in A_{i-1}\setminus A_i$, let~$(v,w)\in
A(\bm{S_{N,t}})$ and pick all the remaining arc orientations independently at random
with probability~$1/2$. That is, for every~$i>0$, if~$k = \lv A_{i-1}\setminus
A_i\rv$, then the set~$A_{i-1}\setminus A_i$ spans~$\bm{T_{1/2}(k)}$.

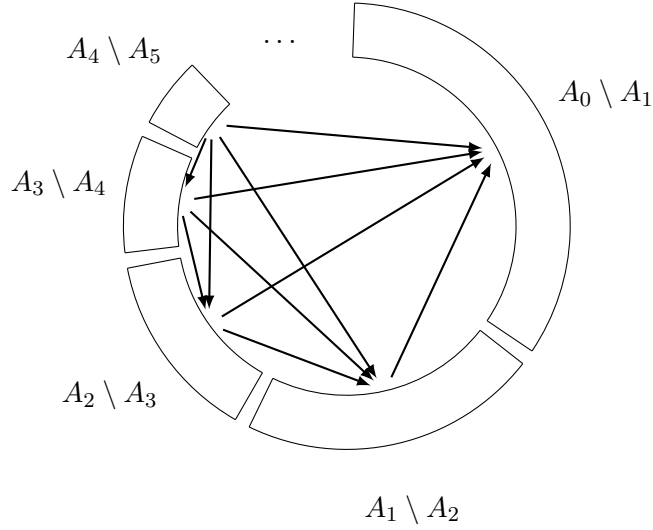
\begin{figure}[ht]
  \begin{center}
    \def\figproportion{0.65}

\begin{tikzpicture}[scale=0.9]
\begingroup
\small
\def\shorten{0.2cm}
\def\halfgap{2}

\def\iterationsmo{4}

\def\inradius{2.5cm}
\def\outradius{3.3cm}
\def\labelradius{4.3cm}

\def\dotsangle{110}
\def\dotsradius{2.9cm}

\foreach[%
  remember=\angle as \prevangle (initially 90),
  remember=\lengthleft as \prevlengthleft (initially 360),
  evaluate=\lengthleft using \figproportion * \prevlengthleft,
  evaluate=\angle using -270+\lengthleft%
] \i in {0,...,\iterationsmo}{
  \pgfmathsetmacro{\startangle}{\prevangle-\halfgap}
  \pgfmathsetmacro{\endangle}{\angle+\halfgap}
  \pgfmathsetmacro{\midangle}{(\startangle + \endangle) / 2}

  \draw (\startangle:\outradius) arc (\startangle:\endangle:\outradius);
  \draw (\startangle:\inradius) arc (\startangle:\endangle:\inradius);

  \draw (\startangle:\inradius) -- (\startangle:\outradius);
  \draw (\endangle:\inradius) -- (\endangle:\outradius);

  \coordinate (P\i) at (\midangle:\inradius);
  \coordinate (L\i) at (\midangle:\labelradius);
}

\foreach \i in {0,...,\iterationsmo}{
  \pgfmathtruncatemacro{\nexti}{\i+1}
  \node at (L\i) {$A_{\i}\setminus A_{\nexti}$};
}

\foreach[remember=\i as \pi (initially 0)] \i in {1,...,\iterationsmo}
\foreach \j in {0,...,\pi}
\draw[shorten >=\shorten,shorten <=\shorten, thick, arrows={-latex}] (P\i) -- (P\j);

\node at (\dotsangle:\dotsradius) {$\ldots$};

\endgroup
\end{tikzpicture}
    \caption{Typical structure of the random tournament~$\bm{S_{N,t}}$. The arcs in
      the picture represent arcs between vertices in distinct parts~$A_{i-1}\setminus
      A_i$. The arcs completely contained any part~$A_{i-1}\setminus A_i$ have their
      orientation picked independently at random with probability~$1/2$ for each
      orientation. This figure uses~$t = \figproportion$, which makes it easier to
      see the structure of the construction but is far from the value of~$t$ that
      maximizes~$\phi_t(W_4)$.}
    \label{fig:maxW4}
  \end{center}
\end{figure}

It is (somewhat) easy to see that~$(\bm{S_{N,t}})_{N\in\NN}$ converges almost
surely to a limit homomorphism~$\phi_t$ such that
\begin{align*}
  \phi_t(W_4) & = \left.(1-t)^3\(t + \frac{1-t}{8}\)\middle\slash(1-t^4)\right..
\end{align*}

Certainly, every value of~$\phi_t(W_4)$ for~$t\in(0,1)$ is a lower bound for the
maximization problem for~$W_4$. The maximum of~$\phi_t(W_4)$ (which can be computed with
standard calculus arguments) is
\begin{align*}
  \max\{\phi_t(W_4) : t\in(0,1)\} & = 1 + \frac{3^{5/3} - 3^{7/3}}{8}
  \approx 0.157501,
\end{align*}
attained when~$t$ is equal to
\begin{align*}
  \frac{2\cdot 3^{2/3} - 3^{1/3} - 2}{5} \approx 0.143584.
\end{align*}

We conjecture that this is actually the maximum value of~$\phi(W_4)$
for~$\phi\in\Hom^+(\cA^0,\RR)$.

\begin{conjecture}
  In the theory of tournaments, we have
  \begin{align*}
    \max\{\phi(W_4) : \phi\in\Hom^+(\cA^0,\RR)\} & = 1 + \frac{3^{5/3} - 3^{7/3}}{8}.
  \end{align*}
\end{conjecture}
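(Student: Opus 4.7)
Since the lower bound $1+(3^{5/3}-3^{7/3})/8$ is already attained by the construction $(\bm{S_{N,t}})_{N\in\NN}$ at the optimal $t$, the task is to establish the matching upper bound for every $\phi\in\Hom^+(\cA^0,\RR)$. My plan is to combine Razborov's flag algebra semidefinite programming with a structural analysis of the extremal homomorphism via the digraphon representation of Proposition~\ref{prop:digraphon}.

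By compactness of $\Hom^+(\cA^0,\RR)$ in the product topology, a maximizer $\phi^*$ exists, and it is represented by some digraphon $W^*$. Applying the differential method gives first-order stationarity identities: for any type $\sigma$ and flag $f\in\cA^\sigma$, the Lagrangian derivative of $\phi^*(W_4)$ against $\phi^*(\llb f\rrb_\sigma)$ produces identities satisfied by $W^*$. A swap-type exchange argument restricted to $1$-type perturbations should force $W^*$ to be a step digraphon of ``iterated dominator plus quasi-random'' form matching Figure~\ref{fig:maxW4}; once this structure is established, the maximization reduces to the one-dimensional calculus problem over $t\in(0,1)$ whose solution is already computed. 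In parallel, I would run the SDP relaxation with flags of order $5$ or $6$ rooted at the types $0$, $1$, and $A$ to obtain a numerical upper bound and attempt to round to an exact sum-of-squares certificate
\begin{align*}
  \left(1+\frac{3^{5/3}-3^{7/3}}{8}\right)\One - W_4 & = \sum_i c_i \llb f_i^2 \rrb_{\sigma_i},
\end{align*}
with $c_i\geq 0$ and every $f_i$ vanishing on $\phi_t$.

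The hardest step is the structural characterization of $W^*$. Unlike the case of $R_4$, whose extremal $\phi_R$ has a simple two-value outdegree profile (implicit in Theorem~\ref{thm:charac}), the extremal for $W_4$ has a geometric-tail distribution of outdegrees, with one atom per recursive layer of $\bm{S_{N,t}}$, so the exchange argument must simultaneously handle countably many regimes while preventing merging, splitting, or rebalancing of layers. For the SDP route, the difficulty is analogous: since the optimal $t=(2\cdot 3^{2/3}-3^{1/3}-2)/5$ is algebraic of degree~$3$, any rounding of a numerical certificate must be carried out over $\QQ(3^{1/3})$ rather than $\QQ$, which is notoriously delicate. I expect that a successful proof will require either an ad hoc algebraic identity encoding the self-similarity recurrence of $\bm{S_{N,t}}$ at the digraphon level, or a clever reduction analogous to the single-variable identity used in Lemma~\ref{lem:Tr4R4} to dispatch the $R_4$ problem.
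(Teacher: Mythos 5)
This statement is an open conjecture in the paper; the paper does not prove it, and you should not expect to find a proof to compare against. What the paper actually provides is (i) the construction $\bm{S_{N,t}}$ establishing the lower bound $\phi_t(W_4)$ for each $t\in(0,1)$, together with the one-variable calculus optimization giving the stated value; (ii) a numerical semidefinite upper bound $\phi(W_4)\leq 0.157516$, which is \emph{strictly above} the conjectured optimum $\approx 0.157501$ and therefore does not close the gap; and (iii) the two lemmas at the end of Section~\ref{sec:conc} showing that within the class of balanced homomorphisms the density of $W_4$ is at most $1/8$, which is heuristic support for putting the balanced block at the bottom of the recursive construction but is not an upper bound for general $\phi$.

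Your proposal is likewise not a proof but a plan of attack, with the hard steps left open, as you acknowledge. Your SDP-plus-rounding route coincides with what the paper itself suggests in the paragraph following the conjecture; neither you nor the paper produces a sum-of-squares certificate at the claimed value, and since the paper's own SDP bound does not reach the conjectured optimum, one would first need higher-order flags or extra valid constraints before rounding over $\QQ(3^{1/3})$ even becomes meaningful. Your second route, via a differential/exchange argument characterizing the extremal digraphon $W^*$ as an iterated dominator construction, is genuinely different from anything in the paper, but the crucial step --- ruling out merging, splitting, or rebalancing of the infinitely many layers --- is stated only as a goal and is the entire content of the problem; without it the structural route proves nothing. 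A minor factual slip: the extremal homomorphism $\phi_R$ for the $R_4$ maximization is balanced, so its outdegree profile is essentially a single atom at $1/2$, not a ``two-value profile''; you appear to be conflating the homomorphism $\phi_R$ with the $4$-vertex tournament $R_4$, whose outdegree sequence is $(1,1,2,2)$.
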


Using the flag~algebra semidefinite method, we were able to obtain the bound
\begin{align*}
  \forall \phi\in\Hom^+(\cA^0,\RR), \phi(W_4) \leq 0.157516,
\end{align*}
subject to floating point rounding errors. This is suggests that the conjecture is
true and that there may be a straightforward (but numerically intensive) proof using
the semidefinite method and rounding techniques
(see~\cite{BHLPUV:MinimumNumberOfMonotoneSubsequences,
  CKPSTY:MonochromaticTrianglesInThreeColouredGraphs,
  DHMNS:AProblemOfErdosOnTheMinimumNumberOfkCliques,
  FV:ApplicationsOfTheSemidefiniteMethod, PV:MinimumNumberOfkCliques} for some
examples).

The intuition of the recursive construction of~$\bm{S_{N,t}}$ is that at every step
we have one part~$A_{i-1}\setminus A_i$ that maximizes the density of~$\vec C_3$
(hence is almost balanced) and another part~$A_i$ whose vertices all beat the
first part. This maximizes the occurrences of~$W_4$ with exactly one
vertex in the latter part, and since only one vertex is being selected in it, we
might as well repeat this structure recursively in~$A_i$.

In this particular construction, we chose the almost balanced part to be
quasi-random. However, one might wonder if this is the best we can do in the class of
almost balanced tournaments to maximize the density of~$W_4$, but the following
couple of lemmas show that this is indeed the case.

\begin{lemma}\label{lem:W4L4}
  In the theory of tournaments, if~$\phi\in\Hom^+(\cA^0,\RR)$ is balanced,
  then~$\phi(W_4)=\phi(L_4)$.
\end{lemma}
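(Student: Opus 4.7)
The strategy is to realize $\frac{1}{4}(W_4 - L_4)$ as the downward projection $\llb\alpha^3-\beta^3\rrb_1$ from~$\cA^1$, and then to invoke the characterization of balancedness as $\bm{\phi^1}(\alpha) = \bm{\phi^1}(\beta) = 1/2$ almost surely.

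First I would compute $\alpha^3 \in \cA^1$ via the flag-algebra product rule: it is the sum of those $1$-flags of order~$4$ in which the labeled vertex beats all three unlabeled vertices. Enumerating the 4-vertex tournaments, there are exactly two such flags: one, call it $V^1$, with underlying tournament~$W_4$ (the labeled vertex being its unique winner), and one, call it $V^1_{\Tr}$, with underlying tournament~$\Tr_4$ (the labeled vertex being its unique top). Hence $\alpha^3 = V^1 + V^1_{\Tr}$. Symmetrically, $\beta^3 = U^1 + U^1_{\Tr}$, with underlying tournaments~$L_4$ (labeled vertex = unique loser) and~$\Tr_4$ (labeled vertex = unique bottom). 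Applying the downward operator and using uniqueness of the distinguished vertex in each case yields $\llb V^1\rrb_1 = \frac{1}{4}W_4$, $\llb U^1\rrb_1 = \frac{1}{4}L_4$ and $\llb V^1_{\Tr}\rrb_1 = \llb U^1_{\Tr}\rrb_1 = \frac{1}{4}\Tr_4$; the $\Tr_4$ contributions therefore cancel in the difference, giving $\llb\alpha^3-\beta^3\rrb_1 = \frac{1}{4}(W_4 - L_4)$.

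Finally, applying~$\phi$, using~\eqref{extensions} together with $\phi(\llb\One_1\rrb_1)=1$, and exploiting the fact that $\bm{\phi^1}$ is an algebra homomorphism, one gets
\[
  \phi(W_4) - \phi(L_4) = 4\,\expect{\bm{\phi^1}(\alpha)^3 - \bm{\phi^1}(\beta)^3},
\]
which vanishes under the balancedness hypothesis. The only step requiring some care is the product computation $\alpha^3 = V^1 + V^1_{\Tr}$ with the correct coefficient~$1$ on each summand; this is a routine enumeration of 4-vertex 1-flags combined with the flag-algebra product formula.
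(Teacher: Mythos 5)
Your proof is correct and follows essentially the same route as the paper: the paper's one-line identity $\phi(\Tr_4+W_4)/4 = \expect{\bm{\phi^1}(\alpha)^3}$ (and its mirror for $\beta$) is exactly what you derive via the product computation $\alpha^3 = V^1 + V^1_{\Tr}$ and the downward projection. The paper simply states this identity without unpacking the product rule, while you supply the underlying flag-algebra bookkeeping.
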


\begin{proof}
  Since~$\phi$ is balanced, we have~$\bm{\phi^1}(\alpha) = \bm{\phi^1}(\beta)$
  a.s. In particular, this means that
  \begin{align*}
    \frac{\phi(\Tr_4 + W_4)}{4} & = \expect{\bm{\phi^1}(\alpha)^3} =
    \expect{\bm{\phi^1}(\beta)^3} = \frac{\phi(\Tr_4 + L_4)}{4},
  \end{align*}
  hence~$\phi(W_4) = \phi(L_4)$.
\end{proof}

\begin{lemma}
  In the theory of tournaments, if~$\phi\in\Hom^+(\cA^0,\RR)$ is balanced,
  then~$\phi(W_4)\leq 1/8$ with equality if and only if~$\phi$ is the quasi-random
  tournament~$\phiqr$.
\end{lemma}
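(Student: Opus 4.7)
The plan is to reduce the problem to bounding $\phi(R_4)$ from below, and then to establish that bound via a Cauchy--Schwarz argument applied to the random variable $\bm{\phi^A}(\vec C_3^A)$, whose first two moments are already under control.

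First, I will extract all the linear information that the balancedness hypothesis gives. Since $\phi$ is balanced, $\bm{\phi^1}(\alpha) = 1/2$ a.s., so in particular $\expect{\bm{\phi^1}(\alpha)^3} = 1/8$. Combining this with the flag identity $\phi(\Tr_4 + W_4)/4 = \expect{\bm{\phi^1}(\alpha)^3}$ (which is the one used already in the proof of Lemma~\ref{lem:W4L4}) yields $\phi(\Tr_4) + \phi(W_4) = 1/2$. Plugging this into $\phi(\Tr_4 + W_4 + L_4 + R_4) = 1$ together with Lemma~\ref{lem:W4L4} (which gives $\phi(W_4) = \phi(L_4)$) produces the key equation
\begin{align*}
  \phi(W_4) & = \frac{1}{2} - \phi(R_4).
\end{align*}
So the bound $\phi(W_4) \leq 1/8$ is equivalent to $\phi(R_4) \geq 3/8$, and the equality case to $\phi(R_4) = 3/8$.

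Second, I will obtain the lower bound $\phi(R_4) \geq 3/8$ from a second-moment (Cauchy--Schwarz / Jensen) argument on $\bm{\phi^A}(\vec C_3^A)$. The first moment is
\begin{align*}
  \expect{\bm{\phi^A}(\vec C_3^A)} & = \phi(\vec C_3) = 1/4
\end{align*}
(using balancedness again), and the second moment was already computed in~\eqref{eqn:2ndmomC3} to be $\phi(R_4)/6$. Applying $\expect{X^2} \geq (\expect X)^2$ gives $\phi(R_4)/6 \geq 1/16$, i.e.\ $\phi(R_4) \geq 3/8$, and hence $\phi(W_4) \leq 1/8$.

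For the equality case, $\phi(W_4) = 1/8$ forces $\phi(R_4) = 3/8$, which is precisely the equality case of Jensen's inequality and so forces $\bm{\phi^A}(\vec C_3^A) = 1/4$ almost surely; this is exactly the quasi-randomness characterization attributed in the introduction to~\cite{CR:OnTheDensityOfTransitiveTournaments}, so $\phi = \phiqr$. Conversely, a direct check that $\phiqr(W_4) = 1/8$ completes the equivalence. The only delicate point is the equality case: one must invoke the external quasi-randomness characterization (or, equivalently, Griffiths' extremality of $\phi(\Tr_4)$), since the tools developed so far in the paper only bound $\phi(R_4)$ from \emph{above}, not from below away from $\phiqr$.
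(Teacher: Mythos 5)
Your proof is correct, and it takes a genuinely different route from the paper's. The paper simply invokes Chung--Graham's Property $P_2$, namely that $\psi(\Tr_4 + R_4) \geq 3/4$ for every $\psi\in\Hom^+(\cA^0,\RR)$ with equality iff $\psi=\phiqr$, and combines it with Lemma~\ref{lem:W4L4}. You instead reduce to the linear identity $\phi(W_4)=1/2-\phi(R_4)$ for balanced $\phi$ (which you could also obtain a touch more directly from Lemma~\ref{lem:Tr4R4}, which already gives $\phi(\Tr_4)=\phi(R_4)$, together with Lemma~\ref{lem:W4L4} and $\phi(\Tr_4+W_4+L_4+R_4)=1$), and then derive $\phi(R_4)\geq 3/8$ intrinsically via Jensen applied to $\bm{\phi^A}(\vec C_3^A)$, using only the moment identities $\expect{\bm{\phi^A}(\vec C_3^A)}=\phi(\vec C_3)$ and~\eqref{eqn:2ndmomC3} that the paper has already established. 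This makes the inequality part of your argument self-contained, which the paper's version is not. For the equality case you correctly flag that some external quasi-randomness characterization is unavoidable with the tools at hand: you cite the $\vec C_3^A$-characterization from~\cite{CR:OnTheDensityOfTransitiveTournaments} (that $\bm{\phi^A}(\vec C_3^A)=1/4$ a.s.\ forces $\phi=\phiqr$), while the paper cites the equality case of $P_2$. The two proofs thus trade one external reference for another, with yours doing a little more local work in exchange for a cleaner by-hand derivation of the $1/8$ bound.
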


\begin{proof}
  Property~$P_2$ of Chung--Graham~\cite{CG:QuasiRandomTournaments} says\footnote{In
    their paper, Chung and Graham work with labelled non-induced densities instead of
    unlabelled induced densities, so a straightforward translation is necessary to get
    this value.} that if~$\psi\in\Hom^+(\cA^0,\RR)$, then~$\psi(\Tr_4+R_4)\geq 3/4$
  with equality if and only if~$\psi=\phiqr$, hence~$\psi(W_4+L_4)\leq 1/4$ with
  equality if and only if~$\psi=\phiqr$.

  On the other hand, since~$\phi$ is balanced, Lemma~\ref{lem:W4L4}
  implies that~$\phi(W_4) = (\phi(W_4+L_4))/2 \leq 1/8$.

  Since~$\phiqr$ is also balanced, the result follows.
\end{proof}

\bigskip

Focusing back on the carousel homomorphism, as we mentioned on
Remark~\ref{rmk:Rname}, the choice of the notation~$R_{2n+1}$ comes from the
similarity of the structure of these tournaments with the structure of~$R_4$. Given
this structural similarity, the following conjecture is natural.

\begin{conjecture}
  For every~$n\in\NN$, the carousel homomorphism~$\phi_R$
  maximizes the density of~$R_{2n+1}$, that is, we have
  \begin{align*}
    \max\{\phi(R_{2n+1}) : \phi\in\Hom^+(\cA^0,\RR)\} & = \phi_R(R_{2n+1}).
  \end{align*}
\end{conjecture}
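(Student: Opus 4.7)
The plan is to show $\phi(R_{2n+1}) \leq \phi_R(R_{2n+1})$ for every $\phi \in \Hom^+(\cA^0, \RR)$, with equality forcing $\phi = \phi_R$, by a two-step argument analogous to the proof of Corollary~\ref{cor:maxR4}. First I would reduce to locally transitive homomorphisms, then within that class further reduce to balanced ones, at which point Lemma~\ref{lem:S12} yields $\phi = \phi_R$ (equivalently, Theorem~\ref{thm:charac} via $S_2 \Rightarrow S_1$).

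For the reduction to locally transitive, I would exploit the fact that every 4-subset of $R_{2n+1}$ is isomorphic to $\Tr_4$ or $R_4$: a copy of $R_{2n+1}$ inside a tournament $T$ uses only $(2n+1)$-subsets all of whose $\binom{2n+1}{4}$ 4-subsets are locally transitive. A naive Kruskal--Katona-type bound gives only $\phi(R_{2n+1}) \leq 1 - \phi(W_4 + L_4)$, which is not sharp. A more promising route is a stability argument: given a sequence $(T_N)_{N \in \NN}$ converging to $\phi$ with $\phi(W_4 + L_4) > 0$, construct a modified sequence $(T_N')$ of locally transitive tournaments with $p(R_{2n+1}, T_N') \geq p(R_{2n+1}, T_N) - o(1)$, by iteratively identifying vertices responsible for many copies of $W_4$ or $L_4$ and locally reorienting arcs around them in a manner that does not destroy existing $R_{2n+1}$ copies.

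Assuming this reduction, I would then parameterize locally transitive homomorphisms via the cyclic ordering of Proposition~\ref{prop:loctranchar}: in the limit, such a $\phi$ is encoded by a measurable function $f \: \RR/\ZZ \to [0,1]$ subject to the consistency relation $f(v) + f(v + f(v)) = 1$ for a.e.\ $v$ (forcing $\int f = 1/2$), where the carousel case corresponds to $f \equiv 1/2$. The density $\phi_f(R_{2n+1})$ can be written as the probability, over $2n+1$ uniform points $y_0, y_1, \ldots, y_{2n}$ on $\RR/\ZZ$ indexed cyclically, that $f(y_j)$ lies in the forward cyclic arc $[D(y_j, y_{j+n}), D(y_j, y_{j+n+1}))$ for every $j$ (indices mod $2n+1$), where $D$ denotes forward arc length. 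The goal is to show that this probability is uniquely maximized at $f \equiv 1/2$, at which point $\phi_f = \phi_R$ by Corollary~\ref{cor:Runique}.

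The main obstacle, I expect, is the first step. The second step should follow from a quantitative stability analysis (one might hope for an inequality of the form $\phi_R(R_{2n+1}) - \phi_f(R_{2n+1}) \geq c_n \int (f - 1/2)^2$, leveraging the rigidity of the LT consistency relation), but eliminating a positive density of $W_4$ and $L_4$ requires flipping $\Omega(N^2)$ arcs, which is exactly the scale at which $R_{2n+1}$-densities genuinely change; the reorientation must therefore be performed carefully enough to preserve, or even increase, the $R_{2n+1}$-count, and I do not see an obvious way to do this without first understanding the structure of near-maximizers for small cases (e.g.\ $n=1$, which reduces to the known quasi-random maximization of $\vec C_3$).
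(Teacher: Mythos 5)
The statement you are attempting is an open \emph{Conjecture} in the paper; the authors provide no proof, so there is no ``paper's own proof'' to compare against, and your proposal is explicitly a strategy sketch rather than a proof --- you flag yourself that the central reduction is missing. That assessment is accurate: the gap you identify is genuine and is almost certainly the reason the statement is left as a conjecture.

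Two concrete remarks on the sketch. First, you assert that equality in $\phi(R_{2n+1}) \leq \phi_R(R_{2n+1})$ should force $\phi = \phi_R$; this already fails for $n = 1$, since $R_3 = \vec C_3$ and by property~$Q_2$ the maximum $\phi(\vec C_3) = 1/4$ is attained by \emph{every} balanced homomorphism, not only $\phi_R$. This is exactly why the paper states the maximization claim for all $n\in\NN$ but isolates uniqueness in a separate conjecture restricted to $n \geq 2$. Second, and more fundamentally, the reduction from a general near-maximizer to a locally transitive homomorphism is where the argument would have to do real work, and neither a Kruskal--Katona bound nor a naive stability/reorientation argument suffices, for the reason you yourself articulate: eliminating a positive density of $W_4$ and $L_4$ forces $\Omega(N^2)$ arc flips, which is the same scale at which $p(R_{2n+1},\cdot)$ genuinely moves. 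The only exact tool the paper has in the analogous $R_4$ case is the linear flag-algebra identity $\vec C_3 = \tfrac14 + \tfrac14 R_4 - \tfrac14 \Tr_4$ underlying Lemma~\ref{lem:Tr4R4} and Corollary~\ref{cor:maxR4}, and there is no evident analogue for $R_{2n+1}$ when $n \geq 2$. Your second step (parameterizing locally transitive limits by a function on $\RR/\ZZ$ and arguing extremality at $f \equiv 1/2$) is a plausible program, but it rests entirely on the unproved first reduction, so the proposal does not close the conjecture.
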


And if the above conjecture is true, then naturally the following conjecture arises.
\begin{conjecture}
  For every~$n\geq 2$, a homomorphism~$\phi\in\Hom^+(\cA^0,\RR)$ maximizes the
  density of~$R_{2n+1}$ if and only if~$\phi=\phi_R$.
\end{conjecture}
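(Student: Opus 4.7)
The plan is to assume the preceding conjecture, so that $\phi_R$ achieves the maximum value $M_n := \phi_R(R_{2n+1}) > 0$, and then prove the nontrivial ``only if'' direction: any $\phi \in \Hom^+(\cA^0, \RR)$ with $\phi(R_{2n+1}) = M_n$ must equal $\phi_R$. By Lemma~\ref{lem:S12}, it suffices to show such a maximizer satisfies $S_2$, i.e., is locally transitive ($\phi(W_4 + L_4) = 0$) and balanced.

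For the \emph{local transitivity} step, I would exploit that $R_{2n+1}$ is itself locally transitive: every 4-subset of a copy of $R_{2n+1}$ in a tournament $T$ is either $\Tr_4$ or $R_4$, so any ``bad'' 4-subset (a copy of $W_4$ or $L_4$) forbids all $\binom{|V(T)|-4}{2n-3}$ of its $(2n+1)$-supersets from inducing $R_{2n+1}$. A second-moment argument along a sequence $(T_N)$ converging to $\phi$ gives, for $\phi(W_4+L_4) = \epsilon$,
\begin{align*}
  p(R_{2n+1}, T_N) \leq \Pr[\text{random } (2n+1)\text{-subset contains no bad 4-subset}] \leq 1 - c_n \epsilon + o(1),
\end{align*}
where $c_n > 0$ depends only on $n$. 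To sharpen this into a gap below $M_n$ rather than below $1$, one would compare $T_N$ with its ``repaired'' version $T_N'$ (flipping the single offending arc in each bad 4-subset, in analogy with the flipping argument used in the proof of Lemma~\ref{lem:S12}): each repair strictly increases the count of $R_{2n+1}$-copies, producing a sequence converging to some $\phi'$ with $\phi'(R_{2n+1}) > M_n$, contradicting the assumed maximality.

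For the \emph{balance} step, once $\phi(W_4 + L_4) = 0$ is known, one may take a sequence $(T_N)$ of locally transitive tournaments converging to $\phi$ and invoke Proposition~\ref{prop:loctranchar} to equip each $T_N$ with a cyclic ordering. In this cyclic structure, a $(2n+1)$-subset induces $R_{2n+1}$ precisely when the outdegrees of its vertices (measured inside the subset) are all exactly $n$, so $p(R_{2n+1}, T_N)$ is controlled by how regularly the outdegrees of $T_N$ are distributed around the cycle. Imbalance would cost a positive fraction of $R_{2n+1}$ copies compared to the carousel $R_{|V(T_N)|}$, yielding $p(R_{2n+1}, T_N) \leq M_n - \Omega(1)$ and contradicting maximality. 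Lemma~\ref{lem:S12} then concludes $\phi = \phi_R$.

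The main obstacle is the \emph{quantitative local-transitivity step}. The delicate part is not merely showing that bad 4-subsets reduce $p(R_{2n+1}, T_N)$ below $1$ but showing that they reduce it strictly below $M_n$, which requires careful bookkeeping of how the repair operation interacts with neighbouring 4-subsets (a single arc-flip can simultaneously destroy and create many incident $R_{2n+1}$-copies). A cleaner route, if one can find it, would be to exhibit an explicit flag-algebra certificate of the form
\begin{align*}
  R_{2n+1} + c(W_4 + L_4) + d(\Tr_4 - R_4) \leq M_n \cdot \One
\end{align*}
in $\cA^0$ with $c, d > 0$; such an inequality would force both $\phi(W_4 + L_4) = 0$ and (via Lemma~\ref{lem:Tr4R4}) balance from the equality $\phi(R_{2n+1}) = M_n$. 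For small $n$, one may hope to obtain such a certificate by the semidefinite method mentioned in the concluding section for $\phi(W_4)$, with the structure of a hand proof then emerging from the rounded solution.
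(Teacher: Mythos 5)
The statement you are addressing is labelled as a \emph{conjecture} in the paper; the author gives no proof of it and explicitly poses it (together with the companion conjecture you condition on) as an open problem. So there is no proof in the text to compare against, and the only question is whether your sketch could plausibly be completed. As written it cannot, for two reasons.

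The decisive gap is the ``repair'' step. You assert that flipping one arc inside each bad $4$-subset ``strictly increases the count of $R_{2n+1}$-copies,'' but an arc-flip is not a local operation with respect to $R_{2n+1}$: the flipped arc $\edge uv$ lies in $\binom{|V(T)|-2}{2n-1}$ candidate $(2n+1)$-subsets, and the flip can simultaneously destroy many existing $R_{2n+1}$-copies (those $(2n+1)$-subsets through $\{u,v\}$ that were carousels under the old orientation) while also \emph{creating} new copies of $W_4$ or $L_4$ in $4$-subsets that were previously good. Nothing in your sketch controls the net effect, and this interaction is precisely what makes the problem hard; you flag it yourself as ``the delicate part'' but supply no mechanism to resolve it. The balance step has a related circularity: the claim that imbalance ``would cost a positive fraction of $R_{2n+1}$ copies compared to the carousel'' quantitatively presupposes that the carousel is extremal among locally transitive tournaments, which is a form of the very statement you set out to prove; you would need a stability estimate, not an appeal to optimality. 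The flag-algebra certificate you suggest at the end would indeed be a clean route if it existed, but you do not exhibit one, and for $n\geq 2$ it would have to live at order at least $2n+1$, so it is not something one gets for free. In short, the proposal is a reasonable sketch of a strategy, but it does not resolve the conjecture, and the paper does not either.
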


\section*{Acknowledgement}
I am grateful to Alexander Razborov for helpful comments.

\bibliographystyle{alpha}
\bibliography{refs}
\end{document}